\documentclass[12pt,fleqn]{article}
\usepackage{amsmath,amsthm,amssymb,amscd}
\usepackage{graphicx}
\usepackage{graphics}
\usepackage{verbatim}

\usepackage{mathrsfs}
\usepackage{color}
\topmargin=-0.5truein \oddsidemargin=0cm \evensidemargin=0cm
\textwidth=17cm \textheight=9truein \setlength{\parindent}{2em}

\makeatletter

\newcommand{\Rmnum}[1]{\expandafter\@slowromancap\romannumeral #1@}

\newtheorem{theorem}{Theorem}[section]
\newtheorem{proposition}[theorem]{Proposition}

\newtheorem{lemma}[theorem]{Lemma}

\newtheorem{corollary}[theorem]{Corollary}

\newtheorem{conjecture}[theorem]{Conjecture}
\makeatother

\begin{document}

\title{Petersen cores and the oddness of cubic graphs}

\vspace{3cm}
\author{Ligang Jin\footnotemark[1] \footnotemark[2], Eckhard Steffen\footnotemark[1]}
\footnotetext[1]{Paderborn Institute for Advanced Studies in
		Computer Science and Engineering and institute for Mathematics,
		Paderborn University,
		Warburger Str. 100,
		33102 Paderborn,
		Germany; ligang@mail.upb.de (Ligang Jin), es@upb.de (Eckhard Steffen).}
\footnotetext[2]{Supported by Deutsche Forschungsgemeinschaft (DFG) grant STE 792/2-1.}
	
\date{}

\maketitle

\begin{abstract}
{\small Let $G$ be a bridgeless cubic graph. Consider a list of $k$ 1-factors of $G$. Let $E_i$ be the
set of edges contained in precisely $i$ members of the $k$ 1-factors. Let $\mu_k(G)$ be the smallest $|E_0|$ over
all lists of $k$ 1-factors of $G$. We study lists by three 1-factors, and call $G[E_0\cup E_2\cup E_3]$ with $|E_0|=\mu_3(G)$ a $\mu_3(G)$-core of $G$.
If $G$ is not 3-edge-colorable, then $\mu_3(G) \geq 3$. In \cite{Steffen_2014} it is shown that
if $\mu_3(G) \not = 0$, then $2 \mu_3(G)$ is an upper bound for the girth of $G$.
We show that $\mu_3(G)$ bounds the oddness $\omega(G)$ of $G$ as well. We prove that $\omega(G)\leq \frac{2}{3}\mu_3(G)$.
If $\omega(G) = \frac{2}{3} \mu_3(G)$, then every $\mu_3(G)$-core has a very specific structure. We call these cores Petersen cores.
We show that for any given oddness there is a cyclically 4-edge-connected cubic graph $G$ with $\omega(G) = \frac{2}{3}\mu_3(G)$.
On the other hand, the difference between $\omega(G)$ and $\frac{2}{3}\mu_3(G)$ can be arbitrarily big. This is true even if we
additionally fix the oddness.
Furthermore, for every integer $k\geq 3$, there exists a bridgeless cubic graph $G$ such that $\mu_3(G)=k$.}
\end{abstract}

\section{Introduction}
Graphs in this paper may contain multiple edges or loops. An edge is a loop if its two ends are the same vertex.
A 1-factor of a graph $G$ is a spanning 1-regular subgraph of $G$.  Hence, a loop cannot be an edge of a 1-factor, and
a bridgeless cubic graph does not contain a loop.
One of the first
theorems in graph theory, Petersen's Theorem \cite{Petersen_1891} from 1891, states that every bridgeless cubic graph has a 1-factor.

Let $G$ be a cubic graph, $k \geq 1$, and $S_k$ be a list of $k$ 1-factors of $G$. By a list we mean a collection with possible repetition.
For $i \in \{0, \dots ,k\}$  let $E_i(S_k)$
be the set of edges which are in precisely $i$ elements of $S_k$.
We define $\mu_k(G) = \min \{ |E_0(S_k)| \colon S_k \mbox{ is a list of $k$ 1-factors of } G\}$. If there is no harm of confusion, then we
write $E_i$ instead of $E_i(S_k)$.

Let $G$ be a cubic graph and $S_3$ be a list of three 1-factors $M_1, M_2, M_3$ of $G$. Let ${\cal M} = E_2 \cup E_3$, ${\cal U} = E_0$ and
$|{\cal U}| = k$. The edges of $E_0$ are also called the uncovered edges.
The  $k$-core of $G$ with respect to $S_3$ (or to $M_1, M_2, M_3$)  is the subgraph $G_c$ of $G$
which is induced by ${\cal M} \cup {\cal U}$; that is, $G_c = G[{\cal M} \cup {\cal U}]$.
If the value of $k$ is irrelevant, then we
say that $G_c$ is a core of $G$. If $M_1 = M_2 = M_3$, then $G_c = G$. A core $G_c$ is  proper if $G_c \not = G$.
If $G_c$ is a cycle, i.e.~the union of pairwise disjoint circuits, then we say that $G_c$ is a  cyclic core. In \cite{Steffen_2014} it is shown that every bridgeless cubic graph
has a proper core and therefore, every $\mu_3(G)$-core is proper.

Bridgeless cubic graphs which are not 3-edge-colorable are also called snarks.
Sometimes snarks with 2- or 3-edge-cut are considered to be trivial, since they easily reduce to smaller ones, see \cite{Cameron_1987, Gardner_76, Goldberg_1981, Isaacs_1975, Nedela_1996, Steffen_1998}.
Therefore, snarks are sometimes required to be cyclically 4-edge-connected and to have girth at least five.
Clearly, if $G$ is a snark, then $\mu_3(G) > 0$.
Cores are introduced in \cite{Steffen_2014}, and they were used to prove partial results on some
hard conjectures which are related to 1-factors of cubic graphs. In particular, the following conjecture of Fan and Raspaud is
true for cubic graphs $G$ with $\mu_3(G) \leq 6$.

\begin{conjecture} [\cite{Fan_Raspaud_94}] \label{Fan_Raspaud}
Every bridgeless cubic graph has a cyclic core.
\end{conjecture}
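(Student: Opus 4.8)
The plan is to recast the statement in terms of a list $S_3=(M_1,M_2,M_3)$ and analyze the three 1-factors locally at each vertex. At a vertex $v$ the three incident edges receive, among the three 1-factors, multiplicities $(c_1,c_2,c_3)$ with $c_1+c_2+c_3=3$; up to relabeling the only possibilities are $(3,0,0)$, $(2,1,0)$, and $(1,1,1)$. An edge lies in the core $G_c=G[\mathcal{M}\cup\mathcal{U}]$ exactly when its multiplicity is $0$, $2$, or $3$, so the core-degree of $v$ is $3$, $2$, or $0$ in the three cases respectively. Hence $G_c$ is a cyclic core if and only if no vertex is of type $(3,0,0)$, i.e.\ if and only if $E_3=\emptyset$: the list uses no edge lying in all three 1-factors. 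This is precisely the standard reformulation of the Fan--Raspaud property, so proving the statement amounts to exhibiting, for every bridgeless cubic $G$, three 1-factors with $M_1\cap M_2\cap M_3=\emptyset$.

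First I would dispose of the 3-edge-colorable case: the three color classes partition $E(G)$, every edge then has multiplicity exactly $1$, so $E_3=\emptyset$ and $G_c$ is the empty graph, vacuously a cycle. The substance therefore lies entirely with the snarks, where $\mu_3(G)>0$ forces $E_0\neq\emptyset$ and a nonempty core. Here I would start from a list realizing $\mu_3(G)$ --- a proper core by the result quoted above --- and try to repair the edges of $E_3$ one at a time. For $e\in E_3=M_1\cap M_2\cap M_3$, the idea is to delete $e$ from one matching, say $M_1$: choose a 1-factor $N$ with $e\notin N$ and flip $M_1$ along the alternating circuit of $M_1\triangle N$ through $e$, producing $M_1'$ with $e\notin M_1'$ and hence $e\notin E_3$ for the new list.

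The key step, and the one I expect to be the main obstacle, is controlling the global effect of such flips, and this is exactly why the statement is in general the open Fan--Raspaud conjecture. Already the existence of the rerouting matching $N$ is nontrivial: an edge lying in \emph{every} 1-factor would force $E_3\neq\emptyset$ for all lists and defeat the statement outright, so one must first rule this out. More seriously, deleting $e$ from $M_1$ can push other edges into $E_3$, and in a snark there is no a priori bound on how these repairs cascade. I would therefore aim for the conditional version furnished by the core machinery: when $\mu_3(G)$ is small the core has few edges and few type-$(2,1,0)$ vertices, so the interaction between $E_3$-edges and $E_0$ is confined to a bounded subgraph and can be settled by a finite case analysis on the possible core structures --- the route by which the statement is known for $\mu_3(G)\leq 6$. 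Obtaining an unconditional bound on the cascade, or a minimal-counterexample argument forbidding a persistent $E_3$-edge in a $\mu_3$-minimal list, is the genuinely hard part and the crux of the whole problem.
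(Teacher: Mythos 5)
You have correctly diagnosed the situation: this statement is not a theorem of the paper at all --- it is stated as Conjecture~\ref{Fan_Raspaud} (Fan--Raspaud, 1994) and the paper offers no proof, only the remark that it is known to hold for cubic graphs with $\mu_3(G)\leq 6$ \cite{Steffen_2014} and with oddness at most $2$ \cite{M_Skoviera_2014}. Your local analysis at a vertex is sound: the three incident edges carry multiplicities $(3,0,0)$, $(2,1,0)$ or $(1,1,1)$, giving core-degree $3$, $2$ or $0$ respectively, so $G_c$ is a cycle (a disjoint union of circuits) if and only if $E_3=\emptyset$, i.e.\ if and only if $M_1\cap M_2\cap M_3=\emptyset$. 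That is exactly the standard formulation of the Fan--Raspaud conjecture and the form in which the paper's core machinery engages with it. Your disposal of the 3-edge-colorable case is also correct (the core is then empty, vacuously a cycle), and your observation that one must first know every edge of a bridgeless cubic graph is avoided by some 1-factor is a genuine prerequisite, though a standard one (it follows, e.g., from the fact that $\frac{1}{3}\mathbf{1}$ lies in the perfect matching polytope of a bridgeless cubic graph).

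The gap you name --- that repairing an edge of $E_3$ by an alternating-circuit flip can create new $E_3$-edges, with no control over the cascade --- is precisely why the conjecture is open, and you are right not to paper over it. So the honest verdict is that your proposal is not a proof and could not be one without resolving a long-standing open problem; but since the paper proves nothing here either, there is no ``paper's own proof'' to diverge from. The only substantive thing you could add in the spirit of this paper is the conditional argument you allude to: for $\mu_3(G)\leq 6$, Lemma~\ref{k_core} ($k=|E_2|+2|E_3|$) confines $|E_3|$ and $|E_2|$ enough that a finite structural analysis of the possible $\mu_3(G)$-cores succeeds. That is the route of \cite{Steffen_2014}, not something established in the present paper.
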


Clearly, if $\mu_4(G) = 0$, then $G$ has a cyclic core. If $G$ is Petersen graph, then $\mu_4(G) \not= 0$.
There are infinite families of snarks whose edge set cannot be covered by four 1-factors, see \cite{Esperet_Mazzuoccolo_2013, Haegglund_2012}.

One major parameter measuring the complexity of a cubic graph $G$ is its oddness, which is the minimum number of odd circuits in a 2-factor of $G$. It is denoted by $\omega(G)$.
Cubic graphs with big oddness can be considered as more complicated than those with small oddness. Indeed, many hard conjectures are proved for
cubic graphs with small oddness. In particular, Conjecture \ref{Fan_Raspaud} is true for cubic graphs with oddness at most two, see \cite{M_Skoviera_2014}.
This paper focuses on the structure of cores and relates $\mu_2$, $\mu_3$ to each other and to $\omega$. In \cite{Steffen_2014} it is shown that
the girth of a snark $G$ is at most $2 \mu_3(G)$. We show that $\omega(G) \leq \frac{2}{3}\mu_3(G)$. Hence, $\mu_3$ bounds the oddness of a snark as well.
Furthermore, we show that if $\omega(G) = \frac{2}{3}\mu_3(G)$, then every $\mu_3(G)$-core of $G$ is a
very specific cyclic core. We call these cores Petersen cores.
  We show that there are infinite classes of snarks where equality holds.
These results  imply that if $G$ has a non-cyclic $\mu_3(G)$-core,
then $\omega(G) < \frac{2}{3}\mu_3(G)$.
We also show that there are
snarks $G$ where the difference between $\omega(G)$ and $\frac{2}{3}\mu_3(G)$ is big. This is even true if we additionally fix the oddness.

The last section proves that for every integer $k\geq 3$, there exists a bridgeless cubic graph $G$ such that $\mu_3(G)=k$.

Let $\gamma_2(G) = \min \{|M_1 \cap M_2| : M_1 \text{ and } M_2 \text{ are 1-factors of } G\}$.
Then, $\mu_2(G) = \gamma_2(G) + \frac{1}{3}|E(G)|$.
These parameters also measure the complexity of cubic graphs since $\gamma_2(G) = 0$ if and only if $G$ is 3-edge-colorable.
Clearly, $\omega(G) \leq 2 \gamma_2(G)$ (see \cite{Steffen_2013}).
We prove $2\gamma_2(G)\leq \mu_3(G)-1.$
A further parameter that measures the complexity of a cubic graph is the resistance $r(G)$, which is the cardinality of a minimum color class of a
proper 4-edge-coloring of $G$. The following proposition summarizes some characterizations of 3-edge-colorable cubic graphs.

\begin{proposition}
If $G$ is a cubic graph, then the following statements are equivalent: $(1)$ $G$ is 3-edge-colorable. $(2)~\omega(G)=0$. $(3)~r(G) = 0$. $(4)~\gamma_2(G)=0$.
	$(5)~\mu_3(G)=0$.
\end{proposition}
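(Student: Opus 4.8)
The plan is to establish that the five conditions are equivalent by proving the single cycle of implications $(1)\Rightarrow(5)\Rightarrow(4)\Rightarrow(2)\Rightarrow(3)\Rightarrow(1)$. The organizing observation is that a proper $3$-edge-coloring of a cubic graph is exactly a partition of $E(G)$ into three $1$-factors, and each of the conditions $(2)$--$(5)$ is, in effect, a different certificate that such a partition exists; so the whole argument amounts to converting one certificate into another. Before starting the cycle I would note that one may assume $G$ has a $1$-factor (which holds in particular whenever $G$ is bridgeless, by Petersen's theorem): if $G$ has no $1$-factor then it is not $3$-edge-colorable, it has no $2$-factor either, and no pair or list of $1$-factors exists, so none of $\omega$, $\gamma_2$, $\mu_3$ can equal $0$ and the equivalence is immediate. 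Observe that bridgelessness is otherwise never used, only the cubic structure.

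The two endpoints of the cycle are near-trivial. For $(1)\Rightarrow(5)$, the three color classes $M_1,M_2,M_3$ of a $3$-edge-coloring form a list $S_3$ with $E_0(S_3)=\emptyset$, so $\mu_3(G)=0$. For $(3)\Rightarrow(1)$, a proper $4$-edge-coloring attaining $r_3(G)=0$ has an empty color class, hence uses at most three colors and is already a proper $3$-edge-coloring. The one genuinely computational step is $(5)\Rightarrow(4)$: given a list $S_3=(M_1,M_2,M_3)$ with $E_0=\emptyset$, I would compare the two ways of counting incidences,
\[
|E_1|+2|E_2|+3|E_3|=\sum_{i=1}^{3}|M_i|=3\cdot\tfrac{1}{2}|V(G)|=|E(G)|=|E_1|+|E_2|+|E_3|,
\]
where the last equality uses $|E_0|=0$. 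This forces $|E_2|=|E_3|=0$, so the three $1$-factors are pairwise disjoint; in particular $M_1\cap M_2=\emptyset$ and $\gamma_2(G)=0$. The remaining two links are structural: for $(4)\Rightarrow(2)$, two disjoint $1$-factors $M_1,M_2$ have union a $2$-factor whose every circuit alternates between $M_1$- and $M_2$-edges and is therefore even, giving $\omega(G)=0$; and for $(2)\Rightarrow(3)$, an even $2$-factor $F$ has $1$-factor complement $M_3=E(G)\setminus F$, and properly $2$-coloring the even circuits of $F$ produces $1$-factors $M_1,M_2$, so $(M_1,M_2,M_3)$ is a proper $3$-edge-coloring, that is, a $4$-edge-coloring with an empty fourth class, whence $r_3(G)=0$.

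I do not expect a serious obstacle here, since each implication is short; the points requiring genuine care are bookkeeping rather than any deep idea. First, the counting identity in $(5)\Rightarrow(4)$ must rule out doubly- or triply-covered edges, which is precisely what equating the two expressions above accomplishes. Second, the arguments must remain valid for multigraphs: a loop can never lie in a $1$-factor, and in the union of two disjoint $1$-factors a pair of parallel edges forms a circuit of length two, which is even and so harmless for the oddness argument. Finally, I would verify the conventions tying $r_3$ to $3$-edge-colorability (an empty color class in a $4$-edge-coloring versus using only three colors) so that $(3)$ is handled cleanly. With these checks in place the cycle closes and all five statements are equivalent.
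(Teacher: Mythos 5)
Your proof is correct. Note, however, that the paper offers no proof of this proposition at all: it is stated as a summary of well-known characterizations of $3$-edge-colorable cubic graphs, so there is no argument in the paper to compare yours against line by line. Your cycle $(1)\Rightarrow(5)\Rightarrow(4)\Rightarrow(2)\Rightarrow(3)\Rightarrow(1)$ is a clean and complete way to fill this gap. Two remarks on how your argument sits relative to the paper's machinery. First, your counting step in $(5)\Rightarrow(4)$, namely $|E_1|+2|E_2|+3|E_3|=|E(G)|=|E_1|+|E_2|+|E_3|$ forcing $|E_2|=|E_3|=0$, is exactly the paper's Lemma~\ref{k_core} (the identity $k=|E_2|+2|E_3|$ for a $k$-core) specialized to $k=0$; you could shorten your proof by citing that lemma, though your self-contained derivation is equally valid. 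Second, your preliminary reduction to the case where $G$ has a $1$-factor is a sensible precaution, since the proposition as stated does not assume bridgelessness; one small loose end is that you list only $\omega$, $\gamma_2$, $\mu_3$ as failing in that degenerate case, whereas $r_3(G)=0$ must also be ruled out --- but that follows immediately from your own implication $(3)\Rightarrow(1)$, since $r_3(G)=0$ would yield a $3$-edge-coloring and hence a $1$-factor. With that observation the argument is airtight.
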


\section[]{Structure of cores and oddness}  \label{sec_1}

The following property of cores will be used very often in this paper.

\begin{lemma} [\cite{Steffen_2014}] \label{k_core}
Let $k$ be a positive integer. If $G_c$ is a $k$-core of a cubic graph $G$, then $k = |E_2|+2|E_3|$.
\end{lemma}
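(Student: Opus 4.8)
The plan is to prove the identity by a single global double count of incidences between edges and the three 1-factors, using only two elementary facts about a cubic graph $G$ on $n$ vertices: every 1-factor has exactly $\frac{n}{2}$ edges, and $G$ itself has exactly $\frac{3n}{2}$ edges. Recall that $k$ is defined to be $|\mathcal{U}| = |E_0|$, so the statement to be proved reduces to the purely numerical identity $|E_0| = |E_2| + 2|E_3|$, and nothing about the induced subgraph $G_c$ beyond the defining partition of $E(G)$ into $E_0, E_1, E_2, E_3$ is actually needed.

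First I would count, in two ways, the total number of incidences between edges of $G$ and the 1-factors $M_1, M_2, M_3$ of the list $S_3$. Counting edge by edge, an edge lying in exactly $i$ of the three 1-factors contributes $i$ to this count, so the total equals $\sum_{i=0}^{3} i\,|E_i| = |E_1| + 2|E_2| + 3|E_3|$. Counting factor by factor, since each $M_j$ is a 1-factor of a cubic graph we have $|M_j| = \frac{n}{2}$, so the total equals $3 \cdot \frac{n}{2} = \frac{3n}{2}$. On the other hand, $G$ is cubic, whence $|E(G)| = \frac{3n}{2}$, and since every edge belongs to exactly one class $E_i$ we also have $\sum_{i=0}^{3} |E_i| = \frac{3n}{2}$. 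Equating the two expressions that both equal $\frac{3n}{2}$ gives $\sum_{i=0}^{3} i\,|E_i| = \sum_{i=0}^{3} |E_i|$, i.e.\ $\sum_{i=0}^{3}(i-1)|E_i| = 0$. In this last sum the $E_1$ term vanishes, leaving $-|E_0| + |E_2| + 2|E_3| = 0$, which is exactly $k = |E_0| = |E_2| + 2|E_3|$.

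I do not expect a genuine obstacle here, since both counts are immediate; the only points needing care are to weight each edge by its correct multiplicity (an $E_3$ edge is counted three times, not once) and to note that the list $S_3$ may repeat a 1-factor, so the argument must not assume $M_1, M_2, M_3$ are distinct. The count above never uses distinctness, so it applies verbatim. As an alternative yielding more structural information, which is likely to be reused when the shape of cores is analyzed, I would argue locally: at each vertex the three 1-factors force its incident edges into one of the patterns $(3,0,0)$, $(2,1,0)$, or $(1,1,1)$; consequently each $E_3$ edge joins two vertices of type $(3,0,0)$ and each $E_2$ edge joins two vertices of type $(2,1,0)$, so there are $2|E_3|$ vertices of the first type and $2|E_2|$ of the second. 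Double counting the $E_0$-incidences (two per type-$(3,0,0)$ vertex, one per type-$(2,1,0)$ vertex) gives $4|E_3| + 2|E_2| = 2|E_0|$, recovering the same identity.
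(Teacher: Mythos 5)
Your double count is correct and is precisely the standard argument: the paper itself cites Lemma \ref{k_core} from \cite{Steffen_2014} without reproducing a proof, and the proof there is the same comparison of $\sum_i i\,|E_i| = 3\cdot\frac{n}{2}$ with $\sum_i |E_i| = |E(G)| = \frac{3n}{2}$. Your remark that distinctness of the $M_j$ is never used is apt (the paper explicitly allows repeated 1-factors in a list), and your alternative local vertex-type argument is also sound and indeed anticipates the kind of local analysis used later for the structure of cores.
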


\begin{theorem} \label{weak_bound} Let $G$ be a bridgeless cubic graph. If $G$ is not 3-edge-colorable,
then $\omega(G) \leq 2\gamma_2(G)\leq \mu_3(G)-1$. Furthermore, if $G$ has a cyclic $\mu_3(G)$-core, then
$\gamma_2(G) \leq \frac{1}{3}\mu_3(G)$.
\end{theorem}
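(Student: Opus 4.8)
The first inequality $\omega(G)\le 2\gamma_2(G)$ is already available from \cite{Steffen_2013}, so the plan is to concentrate on the bound $2\gamma_2(G)\le\mu_3(G)-1$ and the cyclic-core statement, both of which I would extract from a single bookkeeping of how three $1$-factors overlap. We may assume $G$ is connected. Fix three $1$-factors $M_1,M_2,M_3$ realizing $\mu_3(G)$; since every $\mu_3(G)$-core is proper \cite{Steffen_2014}, we have $E_1\neq\emptyset$. Set $m=|E_3|$ and $a=|M_1\cap M_2\setminus M_3|$, $b=|M_1\cap M_3\setminus M_2|$, $c=|M_2\cap M_3\setminus M_1|$, so that $|E_2|=a+b+c$. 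By Lemma~\ref{k_core}, $\mu_3(G)=|E_2|+2|E_3|=(a+b+c)+2m$. Each pairwise intersection satisfies $|M_1\cap M_2|=a+m$, and likewise for the other two pairs, so $\gamma_2(G)\le m+\min\{a,b,c\}$.

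Assume without loss of generality that $a=\min\{a,b,c\}$. Plugging $\gamma_2(G)\le a+m$ into the target inequality, a one-line computation shows that $2\gamma_2(G)\le\mu_3(G)-1$ will follow once I prove $b+c\ge a+1$. Since $a$ is the minimum, $b+c\ge 2a\ge a$, with equality throughout precisely when $a=b=c=0$; hence the entire middle inequality reduces to \emph{excluding the degenerate configuration} $a=b=c=0$. This exclusion, which is where the snark hypothesis must be used, is the step I expect to be the main obstacle.

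To rule it out, suppose $a=b=c=0$, so that $E_2=\emptyset$. Then at each vertex the three factors select either one edge with multiplicity three (type $(3,0,0)$) or three distinct edges (type $(1,1,1)$); write $V_3$ and $V_1$ for the corresponding vertex sets. Checking incidences, every edge of $E_1$ has both ends in $V_1$, while every edge of $E_0\cup E_3$ has both ends in $V_3$, so there is no edge between $V_1$ and $V_3$. Connectivity then forces $V_1=\emptyset$ or $V_3=\emptyset$. If $V_3=\emptyset$ the three factors form a proper $3$-edge-colouring, contradicting that $G$ is a snark; if $V_1=\emptyset$ then $E_1=\emptyset$, contradicting that the core is proper. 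Thus $a=b=c=0$ is impossible, $b+c\ge a+1$ holds, and chaining with \cite{Steffen_2013} gives the full inequality $\omega(G)\le 2\gamma_2(G)\le\mu_3(G)-1$.

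For the final assertion, note that a cyclic $\mu_3(G)$-core is $2$-regular and therefore has no vertex of degree $3$; since a vertex of degree $3$ in the core is exactly a vertex incident to an $E_3$-edge, this means $E_3=\emptyset$, i.e.\ $m=0$. Then $\mu_3(G)=|E_2|=a+b+c$, and the averaging bound already in hand gives $\gamma_2(G)\le\min\{a,b,c\}\le\frac13(a+b+c)=\frac13\mu_3(G)$, as required.
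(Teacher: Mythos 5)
Your proof is correct, and it shares the paper's skeleton --- fix three 1-factors attaining $\mu_3(G)$, invoke Lemma~\ref{k_core}, and bound $\gamma_2(G)$ by the pairwise intersections $|M_i\cap M_j|=|E_3|+(\text{a part of }E_2)$ --- but it handles the crucial ``$-1$'' differently, and the difference is instructive. The paper sums over all three pairs to get $3\gamma_2(G)\leq |E_2|+3|E_3|$, hence $2\gamma_2(G)\leq \mu_3(G)-\frac{1}{3}|E_2|$, and then splits by parity: for odd $\mu_3(G)$ the evenness of $2\gamma_2(G)$ already suffices, while for even $\mu_3(G)$ it invokes the claim that $|E_2|\neq 0$ ``since $G_c$ is a proper core,'' with no further justification. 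You instead use the minimum pairwise intersection, $\gamma_2(G)\leq |E_3|+\min\{a,b,c\}$, which reduces both parities uniformly to the single statement $E_2\neq\emptyset$, and --- this is the real added value --- you actually prove that statement: the decomposition into $(1,1,1)$- and $(3,0,0)$-vertices, the observation that no edge joins the two classes, and connectivity together force either a proper $3$-edge-coloring (contradicting the snark hypothesis) or $E_1=\emptyset$ (contradicting properness of the $\mu_3(G)$-core, cited from \cite{Steffen_2014}). Note that properness alone, which is all the paper's wording appeals to, does not imply $|E_2|\neq 0$ (three pairwise disjoint 1-factors of a $3$-edge-colorable graph yield a proper core with $E_2=\emptyset$); your argument correctly identifies that the snark hypothesis and connectivity must also enter, so your write-up substantiates a step the paper leaves terse, at the cost of needing the (routine, but worth one line) remark that $\omega$, $\gamma_2$ and $\mu_3$ are additive over components so that one may assume $G$ connected. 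Your cyclic-core argument (cyclic $\Rightarrow$ $2$-regular $\Rightarrow$ $E_3=\emptyset$ $\Rightarrow$ $\gamma_2(G)\leq\min\{a,b,c\}\leq\frac{1}{3}|E_2|=\frac{1}{3}\mu_3(G)$) is the paper's inequality (1) specialized in the same way, with the step ``cyclic implies $E_3=\emptyset$'' made explicit; just be sure to state that the 1-factors analyzed there are the ones realizing the cyclic core assumed in the hypothesis, rather than the arbitrary optimal list fixed at the outset.
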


\begin{proof} As already mentioned we have $\omega(G) \leq 2 \gamma_2(G)$.

Let $G_c$ be a $\mu_3(G)$-core of $G$.
By the minimality of $\gamma_2(G)$, we have $3\gamma_2(G)\leq |E_2|+3|E_3|$.
Combining this inequality with $\mu_3(G)=|E_2|+2|E_3|$ (Lemma \ref{k_core}) yields $$2\gamma_2(G)\leq \mu_3(G)-\frac{1}{3}|E_2|.\eqno(1)$$

Hence, the first statement is trivial if $\mu_3(G)$ is odd. If $\mu_3(G)$ is even, then it follows from the fact that $|E_2| \not= 0$, since
$G_c$ is a proper core of $G$.

Furthermore, if $G_c$ is cyclic, then (1) implies that $\gamma_2(G) \leq \frac{1}{3} \mu_3(G)$.
\end{proof}

The bound of Theorem \ref{weak_bound} is achieved by every snark $G$ with $\mu_3(G) = 3$. We will see that there are infinitely many snarks with this property.
For snarks $H$ with $\mu_3(H) > 3$ we will prove a better upper bound for the oddness in terms of $\mu_3(G)$.

Let $G$ be a bridgeless cubic graph and $G_c$ a core of $G$ with respect to three 1-factors $M_1, M_2, M_3$.
$G_c$ is called a Petersen core if the following two conditions hold:

(1)~ $G_c$ is cyclic.

(2)~ If $P$ is a path of length 5 in $G_c$, then there exists no pair of edges $e_1,e_2$ of $P$ and two integers $i,j$ such that $e_1,e_2\in M_i \cap M_j$ and $1\leq i< j \leq 3$.

Let $H$ be a graph. The number of components of $H$ is denoted by $|H|_c$, and the number of odd components is denoted by $|H|_{odd}$.
Furthermore, if $H$ is cubic and $M$ is a 1-factor of $H$, then $\overline{M}$ denotes the complementary 2-factor of $M$.

\begin{theorem} \label{odd circuit-mu3}
Let $G$ be a bridgeless cubic graph. If $G_c$ is a $k$-core of $G$ with respect to three 1-factors $M_1, M_2, M_3$,
then $|\overline{M_1}|_{odd}+|\overline{M_2}|_{odd}+|\overline{M_3}|_{odd}\leq 2k.$
Moreover, if $G_c$ is a $k$-core such that the equality holds, then $G_c$ is a Petersen core.
\end{theorem}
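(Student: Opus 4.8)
The plan is to bound the three quantities simultaneously by reading off the local structure of the core and then charging odd circuits to the matching $E_2\cup E_3$. First I would record the vertex types of $G_c$. Since each $M_\ell$ meets every vertex exactly once, at a vertex $v$ the memberships of its three incident edges form a multiset summing to $3$; hence $v$ is incident either to one $E_3$-edge and two $E_0$-edges (and so has degree $3$ in $G_c$) or to one $E_2$-edge, one $E_0$-edge and one $E_1$-edge (degree $2$ in $G_c$). Consequently $E_2\cup E_3$ is a perfect matching of $G_c$, the uncovered set $E_0$ induces a subgraph of maximum degree $2$, and Lemma~\ref{k_core} gives $k=|E_2|+2|E_3|$.

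Next I would locate the odd circuits. The $2$-factor $\overline{M_i}$ contains no $E_3$-edge, contains every $E_0$-edge, and contains exactly those $E_2$-edges whose two owning factors both differ from $i$. Any circuit of $\overline{M_i}$ using only $E_1$-edges is even, because on $G[E_1]$ the three factors restrict to a proper $3$-edge-colouring and there $\overline{M_i}$ is the union of two colour classes, hence a disjoint union of even circuits (and even paths ending at degree-$2$ vertices of $G_c$). Therefore every odd circuit of $\overline{M_i}$ meets $E_0\cup E_2$, and in fact contains an $E_0$-edge, since each $E_2$-edge present in $\overline{M_i}$ is flanked in $G_c$ by two $E_0$-edges.

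The heart of the argument is the counting step. I would trace each $\overline{M_i}$ on $G_c$: deleting the $E_3$-edges and the $E_2$-edges owned by $i$ breaks $\overline{M_i}\cap G_c$ into paths whose degree-$1$ ends are exactly the end-vertices of the deleted $E_2$-edges owned by $i$, possibly together with some circuits lying entirely in $G_c$. Every odd circuit of $\overline{M_i}$ either contains one of these trace-paths or is one of these core circuits, and distinct odd circuits use disjoint trace-paths, so $|\overline{M_i}|_{odd}$ is at most the number of $E_2$-edges owned by $i$ plus the number of odd core circuits of $\overline{M_i}$. Summing over $i$ and using that each $E_2$-edge is owned by exactly two of the three factors contributes $2|E_2|$; a parity count — an original-length computation shows that a core circuit of $\overline{M_i}$ is odd only if it passes through an odd number of the degree-$3$ (that is, $E_3$) vertices — should absorb the remaining odd core circuits into $4|E_3|$, yielding
$$|\overline{M_1}|_{odd}+|\overline{M_2}|_{odd}+|\overline{M_3}|_{odd}\le 2|E_2|+4|E_3|=2k.$$
I expect the accounting of these core circuits, and the way the $E_3$-edges interact with all three factors, to be the main obstacle, since the crude bound on odd core circuits is too weak on its own and must be traded against the slack in the trace-path count.

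Finally, for the equality case I would run the inequalities backwards. Equality forces $|E_3|=0$, so $E_0\cup E_2$ is $2$-regular and $G_c$ is cyclic, giving condition (1); it also forces every trace-path of every $\overline{M_i}$ to close up into its own odd circuit, with no even circuit and no two trace-paths sharing a circuit. Translating this rigidity back to $G_c$ forces the types of the $E_2$-edges along each circuit of $G_c$ to be so spread out that no subpath of length $5$ carries two $E_2$-edges lying in a common $M_i\cap M_j$, which is exactly condition (2). Hence equality implies that $G_c$ is a Petersen core.
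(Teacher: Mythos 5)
Your setup is sound and is in fact the same decomposition the paper uses: your ``trace'' of $\overline{M_i}$ on the core is exactly the paper's graph $H_i$ (the core minus $E_3$ and minus the $E_2$-edges owned by $M_i$), and your count of $|E_2\cap M_i|$ trace-paths, with each odd circuit of $\overline{M_i}$ containing either a trace-path or a whole core circuit, is correct. The genuine gap is the step you yourself flag as the main obstacle, and it does not close as written. You need the odd core circuits, summed over $i=1,2,3$, to be absorbed into $4|E_3|$; but your parity lemma (a core circuit of $H_i$ is odd only if it passes through an odd number of ends of $E_3$-edges) by itself only bounds the odd circuits of each $H_i$ separately by the number of such vertices, $2|E_3|$, giving $6|E_3|$ in total --- too weak, as you concede. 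The missing idea is not a trade against slack in the trace-path count but a classification by multiplicity: a core circuit that survives as a circuit in all three $H_i$ (and is hence counted three times) contains no $E_2$-edge at all, so \emph{every} one of its at least three vertices is an end of an $E_3$-edge; a core circuit containing an $E_2$-edge survives as a circuit in exactly one $H_i$ (an $E_2$-edge lies in two of the three factors) and contains at least one such vertex by your parity lemma. Since all these circuits are pairwise disjoint and $G$ has exactly $2|E_3|$ ends of $E_3$-edges, writing $n_0$ for the odd all-uncovered circuits and $n_i$ for the remaining odd circuits of $H_i$ gives $3n_0+n_1+n_2+n_3\leq 2|E_3|$. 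This is what the paper proves via its ``bad vertex'' count, and it yields the sharper inequality $\sum_{i=1}^3|\overline{M_i}|_{odd}\leq 2|E_2|+2|E_3|=2k-2|E_3|$, not merely $2k$.

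The sharper form matters because your treatment of the equality case has two further gaps. First, from your claimed bound $2|E_2|+4|E_3|=2k$, equality does \emph{not} force $|E_3|=0$; that implication needs $\sum_{i=1}^3|\overline{M_i}|_{odd}\leq 2k-2|E_3|$. Second, the sentence ``translating this rigidity back to $G_c$ forces the types of the $E_2$-edges \dots to be so spread out'' is a restatement of condition (2) of a Petersen core, not a proof of it. The paper has to do real work here: it defines a \emph{bad path} (an odd path $P$ in $G_c$ with $E(P)\cap M_i=\emptyset$ whose end-vertices are both incident to edges of $M_i\cap M_j$), uses equality in the trace-path inequality to show that the circuit $C$ of $\overline{M_i}$ containing a bad path $P$ must be odd and must contain no uncovered edge outside $P$, so that $C-E(P)$ is an even path alternating between edges of the two remaining 1-factors; this contradicts the fact that both edges of $C$ adjacent to the ends of $P$ lie in the same 1-factor. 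It then checks that any path of length 5 violating condition (2) contains a bad path (two configurations). Your sketch needs this argument, or a substitute for it, before the ``moreover'' part can be considered proved.
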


\begin{proof}
\begin{figure}[h]
  \centering
  \includegraphics[width=12cm]{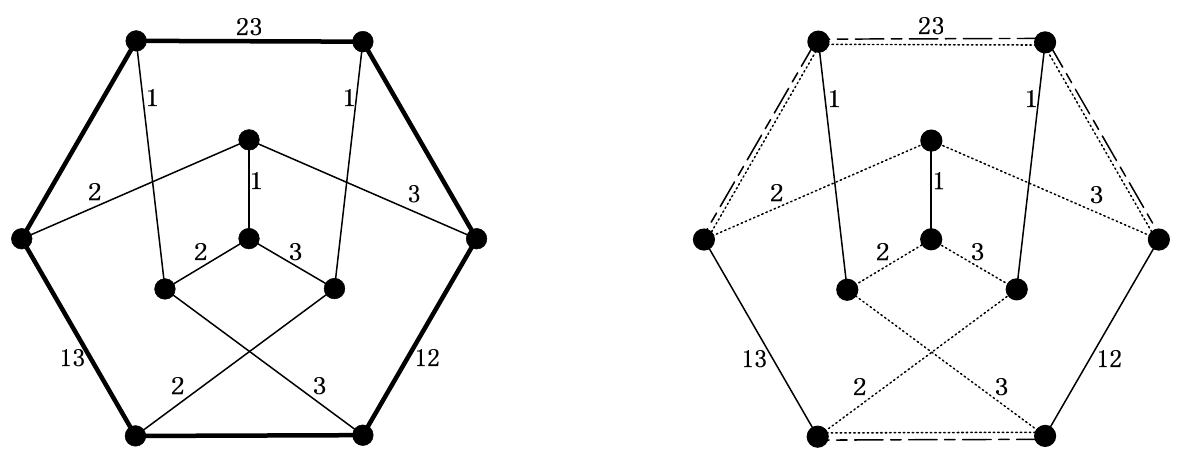}\\
  \caption{the left figure gives a 3-core (in bold line) of Petersen graph where the equality holds, and the right figure gives
  $\overline{M_1}$ (in dotted line) and $\widehat{H_1}$ (in dashed line)}\label{FigThm}
\end{figure}

Let $H$ be a subgraph of $G_c$ which is induced by $E_0\cup E_2$.
Clearly, $H$ consists of pairwise disjoint circuits.
Let $E_{(i)}=E_1\cap M_i$ for $i \in \{1,2,3\}$, and $E_{(i,j)}=E_2\setminus M_l$ for $\{i,j,l\}=\{1,2,3\}$.
We classify the components of $H$ as follows: let $D$ be a component of $H$.
If $D$ contains edges only from $E_0$, then $D$ is class 0. If $D$ is not of class 0 and it contains no edge from $M_i$, then $D$ is class $i$, for $i \in \{1,2,3\}$.
If $D$ is not of class $i$ for all $i \in \{0,1,2,3\}$, then $D$ is class 4.

For $j \in \{0,1,2,3,4\}$ let $Y_j$ be the graph consisting of components of $H$ which are class $j$.

Let $i \in \{1,2,3\}$.
Let $C$ be an odd circuit of $\overline{M_i}$. Then $C$ has at least one uncovered edge, say $e$.
Let $H_i$ be a subgraph of $H$ induced by $E(H)\setminus M_i$.
Clearly, $e\in H_i$. Let $P_e$ be the component of $H_i$ containing $e$.
Since $C$ is a component of $\overline{M_i}$ and since $H_i$ is a subgraph of $\overline{M_i}$, $C$ contains $P_e$.
Furthermore, $P_e$ is either a path or an odd circuit.
Let $\widehat{H_i}$ be the subgraph of $H_i$ consisting of all the components of $H_i$ each of which is either a path or an odd circuit.
It follows that $|\overline{M_i}|_{odd}\leq |\widehat{H_i}|_c$.
Hence, $$\sum_{i=1}^3|\overline{M_i}|_{odd}\leq \sum_{i=1}^3|\widehat{H_i}|_{c}.\eqno(1)$$

Let $D$ be a component of $H$. If $E(D)\cap M_i=\emptyset,$ then $D$ is a component also of $H_i$;
otherwise, the graph induced by $E(D) \setminus M_i$ consists of $|E(D)\cap M_i|$ many disjoint paths and each of these paths is a component of $H_i$.
It follows that $|\widehat{H_i}|_{c}=|Y_0|_{odd}+|Y_i|_{odd}+|E(H)\cap M_i|=|Y_0|_{odd}+|Y_i|_{odd}+|E_2\cap M_i|$.
Hence, $$\sum_{i=1}^3|\widehat{H_i}|_{c}=3|Y_0|_{odd}+\sum_{i=1}^3|Y_i|_{odd}+\sum_{i=1}^3|E_2\cap M_i|.\eqno(2)$$

A vertex $v$ of $G$ is called a bad vertex if $v$ is incident with two uncovered edges.
Clearly, $G$ has precisely $2|E_3|$ many bad vertices.
Since every vertex of $Y_0$ is a bad vertex, $Y_0$ has at least $3|Y_0|_{odd}$ bad vertices.
Let $T$ be any odd component of $Y_1$. Since $T$ is an odd circuit and every edge of $T$ is either uncovered or from $E_{(2,3)}$, it follows that $T$ has at least one pair of adjacent uncovered edges. Hence, $T$ has at least one bad vertex. Thus, $Y_1$ has at least $|Y_1|_{odd}$ bad vertices.
Similarly, for each $j\in \{2,3\}$, $Y_j$ has at least $|Y_j|_{odd}$ bad vertices.
Since $Y_0,Y_1,Y_2,Y_3$ are pairwise disjoint subgraph of $G$, it follows that $Y_0,Y_1,Y_2,Y_3$ have at most $2|E_3|$ bad vertices in total. Thus, $$3|Y_0|_{odd}+\sum_{i=1}^3|Y_i|_{odd}\leq 2|E_3|.\eqno(3)$$

By combining inequalities (1),(2),(3) and the equality $\sum_{i=1}^3|E_2\cap M_i|=2|E_2|$, we conclude that $\sum_{i=1}^3|\overline{M_i}|_{odd}\leq 2|E_2|+2|E_3|$.
By Lemma \ref{k_core} we have $k=|E_2|+2|E_3|$ and therefore, $$\sum\limits_{i=1}^3|\overline{M_i}|_{odd}\leq 2k-2|E_3|\leq 2k.\eqno(4)$$

This completes the first part of the proof.

Now let $G_c$ be a core such that $\sum\limits_{i=1}^3|\overline{M_i}|_{odd}=2k$. By (4), we have $|E_3|=0$. Thus, $G_c$ is a cyclic core.
Furthermore, since $|E_3|=0$, we deduce from (in-)equalities $(1),(2),(3)$ that
$2k=\sum_{i=1}^3|\overline{M_i}|_{odd}\leq \sum_{i=1}^3|\widehat{H_i}|_{c}=2|E_2|$ and from Lemma 2.1 that $k=|E_2|$.
Therefore, $\sum_{i=1}^3|\overline{M_i}|_{odd}= \sum_{i=1}^3|\widehat{H_i}|_{c}$, that is, the inequality (1) becomes an equality.

A path $P$ is bad if it is of odd length and
(a) there is $i \in \{1,2,3\}$ such that $M_i \cap E(P) = \emptyset$, and
(b) the end-vertices of $P$ are incident to an edge of $E_{(i,j)}$, for a $j \in \{1,2,3\} \setminus \{i\}$.

By definition, every bad path of $G$ contains an uncovered edge.

We claim that $G_c$ has no bad path.
Suppose to the contrary that $P$ is a bad path of $G_c$. Without loss of generality, suppose that $E(P)\cap M_1=\emptyset$ and both end-vertices of $P$ are incident with an edge from $E_{(1,2)}$.
Thus $P$ is a component of $\widehat{H_1}$.
Let $C$ be the circuit of $\overline{M_1}$ containing $P$.
Since $\sum_{i=1}^3|\overline{M_i}|_{odd}= \sum_{i=1}^3|\widehat{H_i}|_{c}$, it follows that $C$ is of odd length and contains no other component of $\widehat{H_1}$.
This implies that $C-E(P)$ is a path of even length with edges from $E_{(2)}$ and from $E_{(3)}$ alternately.
But then $P$ has an end-vertex incident with an edge from $E_{(2)}$ and with an edge from $E_{(1,2)}$, a contradiction.


It remains to show that $G_c$ is a Petersen core. Suppose to the contrary that $G_c$ is not a Petersen core. Then $G_c$ violates the second part of the definition of a Petersen core.
Without loss of generality, we may assume that $Q=uvwxyz$ is a path of length 5 in $G_c$ and $e_1, e_2$ are two edges of $Q$ such that $e_1, e_2 \in E_{(1,2)}.$
It suffices to consider the following two cases.

Case 1: $e_1=uv$ and $e_2=wx$. Then $vw$ is a bad path of $G_c$, a contradiction.

Case 2: $e_1=uv$, $e_2=yz$, and $wx \not \in E_{(1,2)}$. Then $vwxy$ is a bad path of $G_c$, a contradiction.

This completes the proof.
\end{proof}

\begin{corollary} \label{omega-mu3}
If $G$ is a bridgeless cubic graph, then $\omega(G)\leq \frac{2}{3}\mu_3(G)$.
Moreover, if $\omega(G) = \frac{2}{3}\mu_3(G)$, then $\omega(G) = 2 \gamma_2(G)$ and every $\mu_3(G)$-core is a Petersen core.
\end{corollary}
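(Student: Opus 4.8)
The plan is to derive everything from Theorem \ref{odd circuit-mu3} applied to a $\mu_3(G)$-core, together with the definition of oddness. First I would record the elementary observation that for each $i \in \{1,2,3\}$ the complementary 2-factor $\overline{M_i}$ is a genuine 2-factor of $G$, so by the definition of oddness it has at least $\omega(G)$ odd circuits; that is, $\omega(G) \leq |\overline{M_i}|_{odd}$. Now fix a $\mu_3(G)$-core $G_c$ with respect to three 1-factors $M_1, M_2, M_3$, so that $k = \mu_3(G)$. Summing the three inequalities and combining with the bound of Theorem \ref{odd circuit-mu3} gives
$$3\,\omega(G) \leq \sum_{i=1}^3 |\overline{M_i}|_{odd} \leq 2\mu_3(G),$$
which is exactly the asserted inequality $\omega(G) \leq \frac{2}{3}\mu_3(G)$.

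For the equality case, suppose $\omega(G) = \frac{2}{3}\mu_3(G)$, equivalently $3\,\omega(G) = 2\mu_3(G)$. Then the displayed chain collapses to a string of equalities. In particular $\sum_{i=1}^3 |\overline{M_i}|_{odd} = 2\mu_3(G) = 2k$, so the equality clause of Theorem \ref{odd circuit-mu3} applies and $G_c$ is a Petersen core. Since $G_c$ was an arbitrary $\mu_3(G)$-core, every $\mu_3(G)$-core is a Petersen core.

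It remains to establish $\omega(G) = 2\gamma_2(G)$. By Theorem \ref{weak_bound} we already have $\omega(G) \leq 2\gamma_2(G)$, so only the reverse inequality is needed. Here I would use that a Petersen core is cyclic by condition (1) of its definition; hence $G$ has a cyclic $\mu_3(G)$-core. The second statement of Theorem \ref{weak_bound} then yields $\gamma_2(G) \leq \frac{1}{3}\mu_3(G)$, so that $2\gamma_2(G) \leq \frac{2}{3}\mu_3(G) = \omega(G)$. Combining the two inequalities gives $\omega(G) = 2\gamma_2(G)$, completing the proof.

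This argument is essentially a bookkeeping exercise that extracts the equality conditions from the inequality of Theorem \ref{odd circuit-mu3}, so I do not expect any step to present a genuine obstacle. The one point deserving care is the logic of the equality case: the hypothesis $\omega(G) = \frac{2}{3}\mu_3(G)$ simultaneously forces each individual term $|\overline{M_i}|_{odd}$ down to $\omega(G)$ and forces the total sum up to the bound $2k$. It is the latter that triggers the Petersen-core conclusion via Theorem \ref{odd circuit-mu3}, while the cyclicity built into the definition of a Petersen core is what feeds back into Theorem \ref{weak_bound} to pin down $\gamma_2(G)$.
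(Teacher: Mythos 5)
Your proof is correct and is essentially identical to the paper's: both derive the inequality by summing $\omega(G)\leq|\overline{M_i}|_{odd}$ over $i$ and invoking Theorem \ref{odd circuit-mu3}, then in the equality case use the equality clause of that theorem to get a Petersen (hence cyclic) core, and feed the cyclicity into the second part of Theorem \ref{weak_bound} to sandwich $2\gamma_2(G)$ between $\omega(G)$ and $\frac{2}{3}\mu_3(G)$. Your write-up is merely a bit more explicit about the minimality argument and the arbitrariness of the chosen core.
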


\begin{proof}
Let $G_c$ be a $\mu_3(G)$-core of $G$ with respect to three 1-factors $M_1, M_2, M_3$.
By Theorem \ref{odd circuit-mu3}, we have $|\overline{M_1}|_{odd}+|\overline{M_2}|_{odd}+|\overline{M_3}|_{odd}\leq 2\mu_3(G)$.
It follows that $\omega(G)\leq \frac{2}{3}\mu_3(G)$ by the minimality of $\omega(G)$.

If $\omega(G) = \frac{2}{3}\mu_3(G)$, then $|\overline{M_1}|_{odd}+|\overline{M_2}|_{odd}+|\overline{M_3}|_{odd} = 2\mu_3(G)$.
Again by Theorem \ref{odd circuit-mu3}, $G_c$ is a Petersen core.
By Theorem \ref{weak_bound}, $\gamma_2(G) \leq \frac{1}{3} \mu_3(G)$.
Therefore, $\omega(G) \leq 2 \gamma_2(G) \leq \frac{2}{3} \mu_3(G) = \omega(G)$. Hence, $\omega(G) = 2 \gamma_2(G)$.
\end{proof}

\section[]{Snarks with specific cores}  \label{sec_1}

This section shows that there exists an infinite class of cubic graphs $G$ with $\omega(G)=\frac{2}{3}\mu_3(G)$. Hence, the upper bound $\frac{2}{3}\mu_3(G)$ for $\omega(G)$ is best possible.

A network is an ordered pair $(G,U)$ consisting of a graph $G$ and a subset $U \subseteq V(G)$ whose elements are called terminals.
A network with $k$ terminals is a $k$-pole. We consider networks $(G,U)$ with $d_G(v) = 1$ if $v$ is a terminal and $d_G(v) = 3$ otherwise.
A terminal edge is an edge which is incident to a terminal vertex.

For $i \in \{1,2\}$ let $T_i$ be a network and $u_i$ be a terminal of $T_i$. The junction of $T_1$ and $T_2$ on $(u_1,u_2)$ is the network obtained from
$T_1$ and $T_2$ by identifying $u_1$ and $u_2$ and suppressing the resulting bivalent vertex.

\begin{theorem} \label{G_k}
For every positive integer $k$, there is a cyclically 4-edge-connected cubic graph $G_k$ of order $26k$ and $\omega(G_k)=r(G_k)=2\gamma_2(G_k)=\frac{2}{3}\mu_3(G_k)=2k$.
\end{theorem}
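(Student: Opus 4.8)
The plan is to construct $G=G_k$ by joining $k$ copies of a single building block derived from the Petersen graph $P$, and then to squeeze all five quantities down to $2k$ using the bounds already established. The guiding observation is that $P$ is itself the prototype: $\omega(P)=r_3(P)=2\gamma_2(P)=\frac{2}{3}\mu_3(P)=2$, with $\mu_3(P)=3$, $\gamma_2(P)=1$, and a $3$-core that is a Petersen core. First I would fix a multipole $N$ obtained from $P$ by deleting a small vertex set, retaining the incident half-edges as terminals and padding with $3$-edge-colorable material, so that $N$ has order $36$ and exactly two ports, each consisting of four terminal edges. Then I would form $G$ as the cyclic junction $N_1\!-\!N_2\!-\!\cdots\!-\!N_k\!-\!N_1$ of $k$ copies, identifying consecutive ports so that each junction becomes a $4$-edge-cut. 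By construction $G$ is cubic of order $36k$, and that $G$ is cyclically $4$-edge-connected follows from the cyclic $4$-edge-connectivity of each Petersen block together with the fact that the only new cuts are the $k$ port cuts, each of size $4$ with a cycle on both sides.

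For the upper bounds I would exhibit three $1$-factors explicitly. On each block $N_i$ I take the three local $1$-factors that witness $\mu_3(P)=3$ (equivalently, that realize the Petersen core of $P$), chosen so that all port edges of $N_i$ are covered; extending them consistently across every junction yields $1$-factors $M_1,M_2,M_3$ of $G$ with $E_3=\emptyset$ and exactly three uncovered edges inside each block. Hence $|E_0|=3k$, so $\mu_3(G)\le 3k$. By Corollary \ref{omega-mu3} this already gives $\omega(G)\le\frac{2}{3}\mu_3(G)\le 2k$, and by the standard bound $r_3(G)\le\omega(G)$ (recolour one edge on each odd circuit of a minimum $2$-factor into a fourth colour) we also get $r_3(G)\le 2k$.

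The heart of the proof, and the main obstacle, is the matching lower bound $\omega(G)\ge 2k$. Here I would argue that every $2$-factor $F$ restricts on each block to a subgraph forcing at least two odd circuits of $F$. The key device is that $F$ meets each $4$-edge port cut in an even number of edges (for any edge cut $(A,B)$ the number of crossing $F$-edges is $2|A|-2|F[A]|$), so on a block $F$ behaves like a $2$-factor of a Petersen multipole with a prescribed even boundary; since $P$ has oddness $2$, no such boundary condition lets a block avoid contributing two odd circuits. Making \emph{contributes two odd circuits} rigorous when an odd circuit may thread through several blocks across the $4$-edge-cuts is the delicate point, which I expect to handle by a parity/counting argument along the cyclic chain of cuts showing the contributions cannot cancel. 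Granting $\omega(G)\ge 2k$, the chain $2k\le\omega(G)\le\frac{2}{3}\mu_3(G)\le\frac{2}{3}\cdot 3k=2k$ forces $\omega(G)=2k$ and $\mu_3(G)=3k$, whence $\frac{2}{3}\mu_3(G)=2k$; and since the equality $\omega(G)=\frac{2}{3}\mu_3(G)$ holds, Corollary \ref{omega-mu3} yields $2\gamma_2(G)=\omega(G)=2k$.

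It remains to pin down $r_3(G)=2k$. The upper bound is in hand, so I would prove $r_3(G)\ge 2k$ by the same localization idea: if a matching $R$ with $|R|<2k$ made $G$ $3$-edge-colorable, then by a counting argument over the $k$ blocks some block would be incident to at most one edge of $R$, and after closing that block up the induced colouring would $3$-edge-colour a copy of $P$ with at most one edge deleted, contradicting the parity obstruction that shows $r_3(P)=2$ (a colour class is a matching, so it must miss an even number of the degree-$2$ vertices created by the deletion). Combining the two lower bounds with the earlier upper bounds gives $\omega(G)=r_3(G)=2\gamma_2(G)=\frac{2}{3}\mu_3(G)=2k$. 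The two localization arguments, for $\omega$ and for $r_3$, are the steps requiring the most care, and both rest essentially on the $4$-edge-cut structure of the cyclic junction and on the corresponding facts $\omega(P)=r_3(P)=2$.
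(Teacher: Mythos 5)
Your proof has a genuine gap at the point you yourself flag as ``the heart of the proof'': the lower bound $\omega(G)\ge 2k$. You propose to show that every 2-factor ``contributes two odd circuits per block'' and to repair the obvious difficulty (an odd circuit may thread through several blocks) by an unspecified ``parity/counting argument along the cyclic chain of cuts''. No such argument is given, and this is not a routine detail: 2-factors, unlike edge-colorings, do not localize to the blocks of a junction. Inside a single block a 2-factor restricts to a disjoint union of circuits and of paths ending on port edges, and the parity of the circuits those paths eventually close up into is decided globally, by how path-ends are matched through \emph{all} the other blocks; the fact that $\omega(P)=2$ for the closed-up Petersen graph therefore says nothing direct about what one block can ``contribute''. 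Precisely because this localization fails, the paper never proves $\omega\ge 2k$ directly. Instead it proves the resistance lower bound $r_3(G_k)\ge 2k$ --- which \emph{does} localize, since a proper 4-edge-coloring of the whole graph restricts to a proper 4-edge-coloring of each block, forcing at least $r_3(B)=2$ edges of the fourth color class per block --- and then transfers this to the oddness via the standard inequality $r_3(G)\le\omega(G)$, obtaining the single chain $2k\le r_3(G_k)\le\omega(G_k)\le\frac{2}{3}\mu_3(G_k)\le 2k$.

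The irony is that you already hold both ingredients of this repair: you state $r_3(G)\le\omega(G)$ (with the correct proof sketch), and you attempt $r_3(G)\ge 2k$; combining them makes your problematic direct oddness bound unnecessary. But your $r_3$ lower bound also needs fixing. The pigeonhole step ``some block is incident to at most one edge of $R$'' fails as stated once edges of $R$ may lie on the 4-edge port cuts: such an edge belongs to two blocks, so $|R|<2k$ only yields a block meeting at most three edges of $R$. Moreover, ``closing that block up'' to obtain a 3-edge-coloring of $P$ with at most one edge deleted silently assumes that the colors induced on the four port edges are compatible with the closure, which is not automatic. The clean statement to prove is about the block as a multipole: every proper 3-edge-coloring (with arbitrary colors permitted on terminal edges) leaves at least two \emph{internal} edges uncolored; summing this over the $k$ blocks under the restriction of any proper 4-edge-coloring of $G$ gives $r_3(G)\ge 2k$ with no double counting. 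With that repair, the remainder of your argument --- the explicit three 1-factors giving $\mu_3(G)\le 3k$, Corollary \ref{omega-mu3} for $\omega(G)\le\frac{2}{3}\mu_3(G)$, and the same corollary yielding $2\gamma_2(G)=\omega(G)$ once equality holds --- coincides with the paper's and is correct.
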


\begin{proof} We will construct graphs with these properties.
Let $B$ be a 4-pole with terminals $a,b,c,d$ as shown in Figure \ref{NetworkB}.
Take $k$ copies $B_0,\dots,B_{k-1}$ of $B$.
Let $G_k$ be the junction of $B_0,\dots,B_{k-1}$ on $(c_i,a_{i+1})$ and $(d_i,b_{i+1})$ for $i \in \{0,\dots,k-1\}$, where the indices are added modulo $k$
(Figure \ref{Fig_G2} illustrates $G_2$ and a $\mu_3(G_2)$-core in bold line).
\begin{figure}[hh]
  \centering
  \includegraphics[width=10cm]{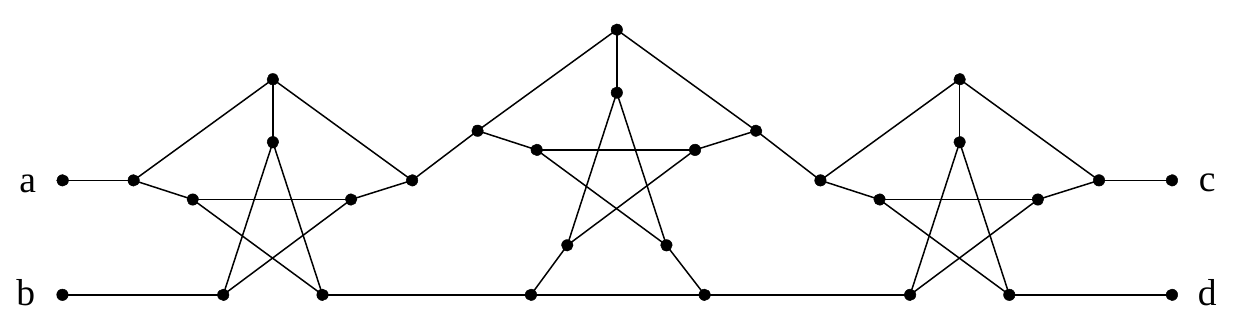}\\
  \caption{4-pole $B$}\label{NetworkB}
\end{figure}

\begin{figure}[hh]
  \centering
  \includegraphics[width=9cm]{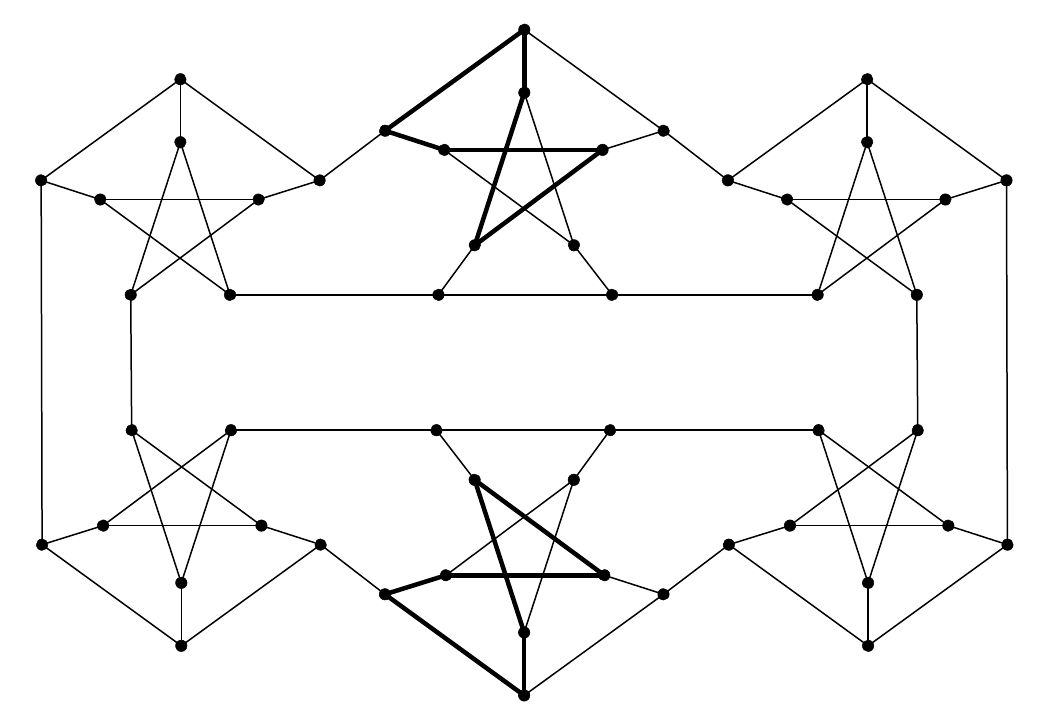}\\
  \caption{$G_2$ and a $\mu_3(G_2)$-core in bold line}\label{Fig_G2}
\end{figure}

It is easy to check that $r(B)=2$. Hence, we have $r(G_k)\geq 2k$.
Furthermore,
let $M'_i,M''_i,M'''_i$ be three matchings of $B_i$ as shown in Figure \ref{Fig_3Coloring_B} labeled with numbers $1, 2, 3$, respectively. Consider these matchings as matchings in
$G_k$, where the edges with the suppressed bivalent vertices belong to $M'_1$.
Let $M'=\bigcup_{i=0}^{k-1} M'_i$, $M''=\bigcup_{i=0}^{k-1} M''_i$, $M'''=\bigcup_{i=0}^{k-1}M'''_i$.
Then $M', M'', M'''$ are three 1-factors of $G_k$, and $G_k$ has precisely $3k$ edges contained in none of $M', M''$ and $M'''$. Hence, we have $\mu_3(G_k)\leq 3k$.
Since $\omega(G_k) \leq \frac{2}{3}\mu_3(G_k)$ by Corollary \ref{omega-mu3}, it follows that $2k\leq r(G_k) \leq \omega(G_k) \leq \frac{2}{3}\mu_3(G_k)\leq 2k$. Therefore, we have $\omega(G_k)=r(G_k)=\frac{2}{3}\mu_3(G_k)=2k = 2\gamma_2(G_k)$, where the last equality follows by Corollary \ref{omega-mu3}.

\begin{figure}[hh]
  \centering
  \includegraphics[width=12cm]{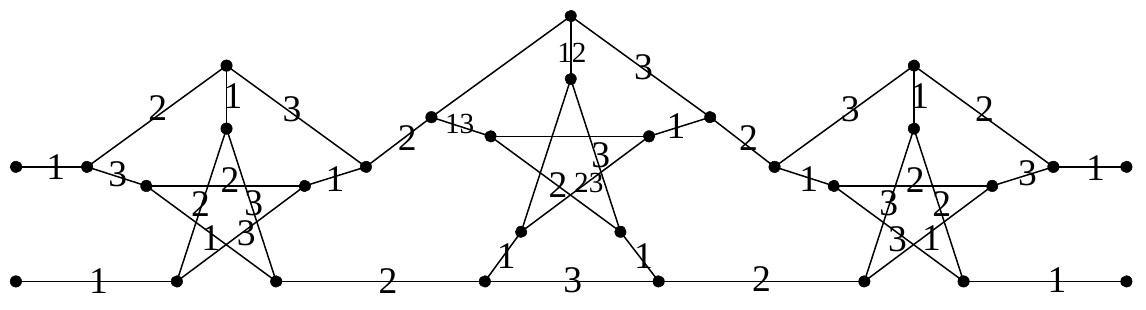}\\
  \caption{three matchings of $B_i$}\label{Fig_3Coloring_B}
\end{figure}
\end{proof}

Lukot'ka et al. \cite{Skoviera_2013} constructed a cyclically 4-edge-connected snark with oddness 4 and forty-four vertices, see Figure 4.
So far it is the smallest known with these properties.
They conjectured that every smallest cyclically 4-edge-connected snark with oddness 4 has forty-four vertices. For the graph of Figure 4
holds $\omega(G)=\frac{2}{3}\mu_3(G)$. The proof, which we omit here, is similar to the one of Theorem \ref{G_k}.

\begin{figure}[hh]
  \centering
  \includegraphics[width=6cm]{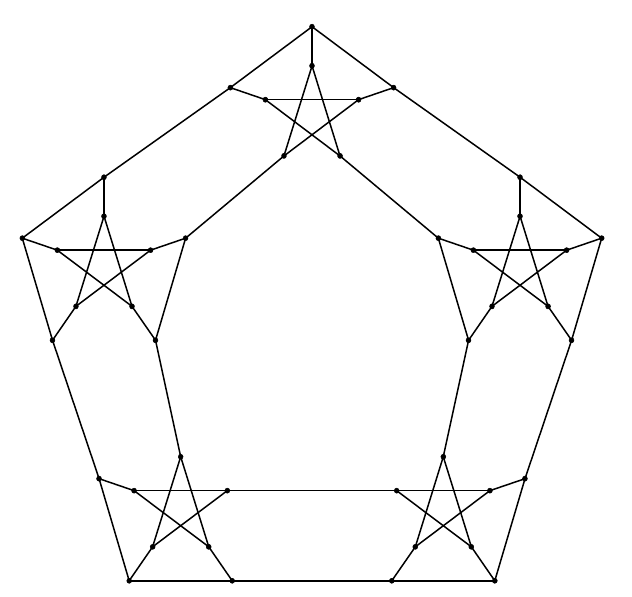}\\
  \caption{A cyclically 4-connected snark of order 44 with oddness 4 \cite{Skoviera_2013}}\label{smallest}
\end{figure}

For $i \in \{1,2\}$ let $H_i$ be two cubic graphs and $u_i$ be a vertex of $H_i$ with neighbors $x_i,y_i,z_i$.
A 3-junction $G$ of $H_1$ and $H_2$ on $(u_1,u_2)$ is the graph obtained from $H_1$ and $H_2$ by deleting vertices $u_1$ and $u_2$, and adding new edges $x_1x_2, y_1y_2$ and $z_1z_2$.
The set $\{x_1x_2, y_1y_2, z_1z_2\}$ is called a 3-junction-cut of $G$ with respect to $H_1$ and $H_2$.

Next we show that the difference between the oddness of a cubic graph $G$ and $\frac{2}{3}\mu_3(G)$ can be arbitrary big. We will use the following theorem
which is a simple consequence of a result of A.~Weiss.

\begin{theorem} [\cite{Weiss_1984}] \label{Thm_HugeGirth}
For every positive integer $c$ there is a connected bipartite cubic graph $H$ with $girth(H) \geq c$.
\end{theorem}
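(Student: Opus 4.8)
The plan is to deduce the bipartite statement from the (a priori non-bipartite) existence of cubic graphs of arbitrarily large girth, which is the substance of Weiss's construction \cite{Weiss_1984}: for every $c$ there is a connected cubic graph $G$ with $girth(G) \geq c$. The only additional work is to pass from such a $G$ to a \emph{bipartite} graph without destroying cubicity or shrinking the girth, and the canonical bipartite double cover is tailored to do exactly this.

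First I would set up the bipartite double cover $\widehat{G} = G \times K_2$: its vertex set is $V(G) \times \{0,1\}$, and $(u,i)(v,1-i)$ is an edge whenever $uv \in E(G)$. By construction $\widehat{G}$ is bipartite, with parts $V(G) \times \{0\}$ and $V(G) \times \{1\}$, and it is cubic, since each vertex $(v,i)$ has exactly the three neighbours $(w,1-i)$ arising from the three edges $vw$ incident to $v$ in $G$. Moreover the projection $(v,i) \mapsto v$ is a $2$-fold covering map of $\widehat{G}$ onto $G$.

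The key step is the girth estimate $girth(\widehat{G}) \geq girth(G)$. Any cycle $C$ in $\widehat{G}$ is a nontrivial closed walk without backtracking; since the projection is a local isomorphism on the edges incident to each vertex, consecutive distinct edges of $C$ map to consecutive distinct edges of $G$, so the image of $C$ is again a closed walk of the same length and without backtracking. A nontrivial closed walk without backtracking contains a cycle of $G$, whose length is at least $girth(G)$, and that length is at most the length of $C$. Hence $|C| \geq girth(G)$, so $girth(\widehat{G}) \geq girth(G) \geq c$.

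It remains only to secure connectivity. If $G$ is non-bipartite, then $\widehat{G}$ is connected, and $H = \widehat{G}$ is the desired connected bipartite cubic graph of girth at least $c$. If instead $G$ happens already to be bipartite, then $\widehat{G}$ splits into two disjoint copies of $G$, and we simply take $H = G$, which is itself connected, bipartite, cubic, and of girth at least $c$. In either case the conclusion follows. The only genuinely substantive input is Weiss's existence of high-girth cubic graphs; the point requiring care is the no-backtracking argument underlying the girth inequality, since without it a cycle of $\widehat{G}$ could in principle project to a short closed walk of $G$ and the bound would fail. I would also remark that the appeal to \cite{Weiss_1984} is replaceable by a self-contained argument: a random cubic graph on $n$ vertices in the configuration model has only $O(1)$ expected cycles of length less than $c$, so a positive proportion have girth at least $c$, and the double-cover construction then supplies the bipartite examples directly.
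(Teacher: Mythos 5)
Your argument is correct, but it is worth noting that the paper itself gives no proof at all: it states this theorem as an immediate consequence of Weiss's result \cite{Weiss_1984}, which concerns \emph{bipartite} sextet graphs, so bipartiteness comes for free from the cited construction and no reduction is needed. Your route is genuinely different: you start from the weaker hypothesis that cubic graphs of arbitrarily large girth exist (bipartite or not), and you upgrade to a connected bipartite example via the canonical double cover $G \times K_2$. The three steps you need are all sound: the double cover is cubic and bipartite; a cycle of the cover projects to a non-backtracking closed walk of the same length (because the covering map is a local isomorphism at each vertex), and any non-backtracking closed walk contains a cycle of at most its length (take a vertex repetition $v_i=v_j$ with $j-i$ minimal), giving $girth(\widehat{G}) \geq girth(G)$; and connectivity splits into the two cases you identify, with the double cover connected exactly when $G$ is non-bipartite and with $H=G$ serving directly when $G$ is already bipartite. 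What your approach buys is flexibility: any source of high-girth cubic graphs (Weiss's algebraic construction, or the configuration-model argument you sketch, where the number of cycles of length less than $c$ is asymptotically Poisson with constant mean, so a positive proportion of random cubic graphs have girth at least $c$) now yields the bipartite statement, making the theorem essentially self-contained rather than dependent on the specific form of Weiss's result. What the paper's citation buys is brevity, since Weiss's graphs already satisfy every requirement of the statement.
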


\begin{lemma} [\cite{Steffen_2014}] \label{girth}
Let $G$ be a bridgeless cubic graph. If $G$ is not 3-edge-colorable, then $girth(G) \leq 2 \mu_3(G)$.
\end{lemma}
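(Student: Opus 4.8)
The plan is to exhibit a single short circuit inside a $\mu_3(G)$-core, rather than to argue about the whole graph. Fix a list of three 1-factors $M_1,M_2,M_3$ realizing $\mu_3(G)$, so that $|E_0| = \mu_3(G) =: k$, and recall that $k\geq 1$ because $G$ is a snark. Following the construction used in the proof of Theorem \ref{odd circuit-mu3}, I would consider the subgraph $H = G[E_0 \cup E_2]$ induced by the uncovered edges together with the edges lying in exactly two of the three 1-factors. First I would record the (already observed) fact that $H$ is a disjoint union of circuits: at each vertex the three incident edges carry 1-factor multiplicities summing to $3$, so the local pattern is either $\{3,0,0\}$ (one $E_3$-edge and two $E_0$-edges) or $\{2,1,0\}$ (one $E_2$-edge, one $E_1$-edge, one $E_0$-edge) or $\{1,1,1\}$; in the first two cases the vertex has exactly two incident edges in $E_0\cup E_2$, and in the last case it has none. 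Hence $H$ is $2$-regular on its support.

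The decisive observation is that no vertex of $H$ is incident to two edges of $E_2$: two such edges would account for four 1-factor memberships at a single vertex, exceeding the available three. Consequently, along any circuit $D$ of $H$ the edges of $E_2$ are pairwise non-adjacent, so that $|E(D)\cap E_2| \le |E(D)\cap E_0|$. Since $E(D) = (E(D)\cap E_0)\cup(E(D)\cap E_2)$, this yields the length bound
\[
|E(D)| = |E(D)\cap E_0| + |E(D)\cap E_2| \le 2\,|E(D)\cap E_0| \le 2\,|E_0| = 2\mu_3(G).
\]

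To finish, I would note that $H$ is non-empty, because $E_0\neq\emptyset$ in a $\mu_3(G)$-core of a snark, and therefore $H$ contains at least one circuit $D$, which is in particular a circuit of $G$. It follows immediately that $girth(G) \le |E(D)| \le 2\mu_3(G)$, as claimed.

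I expect the main point to be conceptual rather than computational: it lies in choosing the right object to examine. Bounding $|\overline{M_i}|_{odd}$, as in Theorem \ref{odd circuit-mu3}, controls how many short odd circuits can occur but says nothing about their length, whereas passing to $H = G[E_0\cup E_2]$ converts the single hypothesis $|E_0| = \mu_3(G)$ directly into a bound on the length of an explicit circuit. The only subtlety requiring care is the local multiplicity bookkeeping, which must be used twice at once: to show that $H$ is $2$-regular on its support, and to show that the $E_2$-edges are isolated within each circuit of $H$. Both facts are consequences of the single arithmetic constraint that the three edges meeting at any vertex have 1-factor multiplicities summing to exactly $3$ (Lemma \ref{k_core} being the global version of this count).
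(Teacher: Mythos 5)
Your proof is correct and complete. Note that this paper does not itself prove Lemma \ref{girth}; it is quoted from \cite{Steffen_2014}, so there is no in-paper proof to compare against. Your argument --- passing to the subgraph $H=G[E_0\cup E_2]$, checking via the local multiplicity count that $H$ is a disjoint union of circuits, and observing that the $E_2$-edges form a matching so that every circuit of $H$ has at least half of its edges uncovered and hence length at most $2|E_0|=2\mu_3(G)$ --- is exactly the machinery this paper deploys at the start of the proof of Theorem \ref{odd circuit-mu3}, and it is the natural (and, in substance, the cited source's) proof of the girth bound.
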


\begin{theorem}
For any positive integers $k$ and $c$, there exists a bridgeless cubic graph $G$ with $\omega(G) = 2k$ and $\mu_3(G)\geq c$.
\end{theorem}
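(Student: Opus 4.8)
The plan is to build $G$ as a large connected cubic graph whose oddness is forced to be exactly $2k$ while its girth is forced to be at least $2c+1$, so that Lemma \ref{girth} (in its contrapositive form) immediately yields $\mu_3(G)\geq c$. The key observation driving the construction is that oddness and girth pull in opposite directions only if we are careless: a graph can simultaneously have large girth and large oddness, and the two invariants can be controlled independently by combining a high-girth bipartite ``skeleton'' with small gadgets that inject odd circuits. Since we already have in hand a supply of high-girth cubic graphs (Theorem \ref{Thm_HugeGirth}) and a supply of cubic graphs realizing any prescribed oddness with the extremal relation to $\mu_3$ (the graphs $G_k$ of Theorem \ref{G_k}, which have $\omega=2k$), the natural strategy is to fuse these two ingredients using the 3-junction operation introduced just before the statement.

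Concretely, I would start from the graph $G_k$ of Theorem \ref{G_k}, which has oddness exactly $2k$, and from a bipartite cubic graph $H$ of girth at least $c'$ for a sufficiently large $c'$ depending on $c$ (and on the fixed size of the oddness gadget), guaranteed by Theorem \ref{Thm_HugeGirth}. The idea is to distribute copies of the oddness-producing blocks $B$ throughout $H$ by repeatedly applying 3-junctions, so that locally the graph still looks like $H$ on long stretches (preserving large girth) but globally it carries $k$ copies of the odd gadget. First I would verify that a single 3-junction, or the insertion of a bounded-size gadget into a high-girth host, changes the girth by at most an additive constant; this lets me absorb the gadget size into the choice of $c'$. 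Second I would check that the 3-junction operation is additive, or at least subadditive and superadditive in the right directions, on oddness, so that the inserted gadgets contribute their odd circuits without the host cancelling them and without the host contributing extra odd circuits of its own---this is where the bipartiteness of $H$ is essential, since a bipartite cubic graph has a $2$-factor consisting only of even circuits and hence contributes oddness $0$.

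The main obstacle, and the part I expect to require the most care, is controlling the oddness of the composite graph from below and above simultaneously: upper-bounding $\omega(G)$ is a matter of exhibiting one good $2$-factor (take an even $2$-factor on the bipartite part and the prescribed near-perfect $2$-factor on each gadget, accounting for the junction edges), but lower-bounding $\omega(G)$ to force $\omega(G)\geq 2k$ requires arguing that no $2$-factor of $G$ can reduce the odd-circuit count below $2k$. The clean way to secure the lower bound is to ensure each odd gadget sits behind a small edge-cut, so that any $2$-factor of $G$ restricts to a near-$2$-factor of the gadget that must leave an odd circuit; a parity argument across the junction-cut then shows the odd circuits cannot be ``merged away'' by the host. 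I would set this up so that the $k$ gadgets are pairwise far apart and separated by the junction-cuts, giving $\omega(G)\geq 2k$, and combined with the even-$2$-factor upper bound this pins $\omega(G)=2k$. Finally, since $G$ is not $3$-edge-colorable (its oddness is positive), Lemma \ref{girth} gives $girth(G)\leq 2\mu_3(G)$, and having engineered $girth(G)\geq 2c$ we conclude $\mu_3(G)\geq c$, completing the proof.
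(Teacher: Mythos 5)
Your construction has the roles of the two ingredients reversed, and this breaks the girth argument on which your whole proof rests. You insert bounded-size odd gadgets (copies of the block $B$) into a high-girth bipartite host $H$, and you claim that ``the insertion of a bounded-size gadget into a high-girth host changes the girth by at most an additive constant,'' to be absorbed into the choice of $c'$. This is false in the direction you need: every circuit lying inside an inserted gadget survives intact in the composite graph, so the girth of the composite graph is at most the length of a shortest circuit of $B$ --- a constant independent of $c'$ and $c$. Such circuits necessarily exist, because any gadget that forces odd circuits into 2-factors (in particular any gadget with $r_3(B)=2$) cannot be a forest; subcubic forests are 3-edge-colorable. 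Hence no choice of $c'$ can give $girth(G)\geq 2c$ once $c$ exceeds the girth of $B$, and your final step --- applying Lemma \ref{girth} to conclude $\mu_3(G)\geq c$ --- collapses. Your oddness analysis (an even 2-factor on the bipartite part for the upper bound, a cut/parity argument for the lower bound) is sound in spirit, but the lower bound on $\mu_3$ is the heart of the theorem, and as written your proposal has no valid route to it.

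The paper's proof inverts the construction precisely to avoid this trap: it keeps $H=G_k$ from Theorem \ref{G_k} (so $\omega(H)=2k$) as the \emph{frame}, takes a bipartite cubic graph $T$ with $girth(T)\geq 2c$ from Theorem \ref{Thm_HugeGirth}, and performs a 3-junction at \emph{every} vertex $v_i$ of $H$ with a fresh copy $T_i$ of $T$. Because every vertex of $H$ is replaced, no circuit of $H$ (in particular none of the short circuits inside the blocks $B$) survives: every circuit of the resulting graph $G$ either lies inside some $T_i-u_i$, hence has length at least $2c$, or projects to a circuit of $H$ and must traverse a long path through each copy $T_i-u_i$ it meets. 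This yields $girth(G)\geq 2c$ honestly, and then Lemma \ref{girth} applies. Oddness is preserved junction by junction: bipartiteness of $T_i$ lets an optimal 2-factor of $H_{i-1}$ be extended without new odd circuits, giving $\omega(H_i)\leq\omega(H_{i-1})$; conversely, a 1-factor of $H_i$ cannot contain all three edges of the junction-cut (else $T_i-u_i$, of odd order, would contain an odd circuit, contradicting bipartiteness), so it contains exactly one, and contracting $T_i$ gives $\omega(H_{i-1})\leq\omega(H_i)$. If you wish to salvage your version, you would need a proof of $\mu_3(G)\geq c$ that does not go through girth at all, and the paper provides no tool for that.
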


\begin{proof} By Theorem \ref{G_k} there is a cyclically 4-edge-connected cubic graph $H$ with $\omega(H) = 2k = \frac{2}{3} \mu_3(G)$. Hence,
we are done for $c \leq 3k$.

Let $V(H) = \{v_1,\dots,v_n\}$.
By Theorem \ref{Thm_HugeGirth}, there is a connected bipartite cubic graph $T$ with $girth(T)\geq 2c$.
Since every bipartite cubic graph has no bridge, $T$ is bridgeless.
Take $n$ copies $T_1,\dots,T_n$ of $T$, and let $u_i$ be a vertex of $T_i$. Let $H_0 = H$ and for $i \in \{1, \dots, n\}$
let $H_{i}$ be a 3-junction of $H_{i-1}$ and $T_{i}$ on $(v_i, u_i)$, and let $G=H_n$.

We claim that $\omega(H_i) = \omega(H_{i-1})$.
Let $M$ be a 1-factor of $H_{i-1}$ such that $\overline{M}$ has $\omega(H_{i-1})$ odd circuits.
Precisely one edge of $M$ is incident to $v_i$. Since $T_i$ is bridgeless cubic and bipartite, it follows that $\overline{M}$
can be extended to a 2-factor of $H_i$ that has $\omega(H_{i-1})$ many odd circuits. Hence,
$\omega(H_i) \leq \omega(H_{i-1})$.

Let $F$ be a 1-factor of $H_i$ such that $\overline{F}$ has $\omega(H_i)$ many odd circuits.
Let $J$ be the 3-junction-cut of $H_i$ with respect to $H_{i-1}$ and $T_i$.
If $F$ contains all edges of $J$, then every circuit of $\overline{F}$ lies either in $H_i - v_{i-1}$ or in
$T_i - u_i$. Since the order of $H_i[V(T_i) \setminus \{u_i\}]$ is odd, it follows that $H_i[V(T_i) \setminus \{u_i\}]$ contains a circuit of odd length, contradicting the fact that $T_i$ is bipartite. Hence,
$F$ contains precisely one edge of $J$. Then $F$ can be transformed to a 1-factor of $H_{i-1}$ by contracting $T_i$ to a vertex.
Since the complement of this 1-factor has
at most $\omega(H_i)$ odd circuits, it follows that $\omega(H_{i-1}) \leq \omega(H_i)$. Therefore, $\omega(G)=\omega(H)$.

By construction we have $girth(G) \geq 2c$, and therefore, $\mu_3(G)\geq c$ by Lemma \ref{girth}.
\end{proof}

Goldberg snarks \cite{Goldberg_1981} and Isaacs flower snarks \cite{Isaacs_1975} are two well known families of snarks. The proof of the following proposition is easy.

\begin{proposition}
If $G$ is a flower snark or a Goldberg snark, then $\omega(G)=2$ and $\mu_3(G)=3$.
\end{proposition}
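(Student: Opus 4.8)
The plan is to split the two equalities and observe that, thanks to Corollary \ref{omega-mu3}, only \emph{one} genuine construction is required; everything else follows from parity and the theory already developed. Write $G$ for a flower snark or a Goldberg snark, so that $G$ is a snark on an even number of vertices ($4n$ vertices for the flower snark $J_n$, $n$ odd; $8k$ vertices for the Goldberg snark $G_k$). I would first record the two cheap lower bounds. Since every $2$-factor of $G$ has total length equal to $|V(G)|$, which is even, the number of odd circuits in any $2$-factor is even; as $G$ is not $3$-edge-colorable we have $\omega(G)\neq 0$, whence $\omega(G)\geq 2$. Feeding this into Corollary \ref{omega-mu3}, which gives $\omega(G)\leq\frac{2}{3}\mu_3(G)$ for every bridgeless cubic graph, yields $\tfrac{2}{3}\mu_3(G)\geq 2$, i.e.\ $\mu_3(G)\geq 3$. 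So both parameters are at least their claimed values with no combinatorial work at all.

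The only thing left is the upper bound $\mu_3(G)\leq 3$, and by definition of $\mu_3$ it suffices to exhibit a single list of three $1$-factors $M_1,M_2,M_3$ with $|E_0|=3$. The idea is to produce a near-proper $3$-edge-colouring that is proper everywhere except near the unique odd cycle that obstructs colourability, and to localize the whole defect there. For the flower snark $J_n$, using the standard labelling with centers $O_i$ and outer vertices $A_i,B_i,C_i$ ($i\in\mathbb{Z}_n$), where the $A_i$ form an odd $n$-cycle and the $B_i,C_i$ form a $2n$-cycle, I would colour the spokes and the even $BC$-cycle consistently around all indices and let the odd $A$-cycle force the defect: the parity clash on the $A$-cycle can be absorbed so that exactly three edges are doubly covered ($E_2$) and exactly three edges are left uncovered ($E_0$), with $E_3=\emptyset$. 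Lemma \ref{k_core} then confirms the bookkeeping, since $|E_0|=|E_2|+2|E_3|=3$. For the Goldberg snark the cyclic block structure is different but the same scheme applies: colour all blocks identically around the cycle and concentrate the obstruction from the odd number of blocks into a single block, again leaving three uncovered edges.

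With $\mu_3(G)\leq 3$ in hand together with $\mu_3(G)\geq 3$ from the first paragraph, we get $\mu_3(G)=3$. Finally, applying Corollary \ref{omega-mu3} once more gives $\omega(G)\leq\frac{2}{3}\mu_3(G)=2$, and combined with $\omega(G)\geq 2$ this forces $\omega(G)=2$. (As a redundant sanity check one may instead exhibit directly a $2$-factor with exactly two odd circuits, e.g.\ for $J_n$ the odd $A$-cycle together with one long odd circuit through the $O_i,B_i,C_i$, which confirms $\omega(G)\leq 2$ independently.)

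The main obstacle is the explicit construction of the three $1$-factors and the verification that precisely three edges remain uncovered; this is elementary but parameter-dependent, and the care lies in showing that the defect forced by the odd cycle (respectively the odd number of blocks) can be confined to a constant-size region regardless of $n$ or $k$, and in treating the two families separately because their building blocks differ. Everything else—both lower bounds and the passage from $\mu_3=3$ to $\omega=2$—is immediate from parity and Corollary \ref{omega-mu3}, which is why the statement is genuinely easy.
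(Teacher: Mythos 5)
The paper gives no proof of this proposition at all (it is dismissed as ``easy''), so there is nothing to compare routes against; your proposal must stand on its own. Its logical skeleton is fine and efficient: every cubic graph has evenly many vertices, so every $2$-factor has an even number of odd circuits and $\omega(G)$ is even; since $G$ is a snark, $\omega(G)\geq 2$; Corollary \ref{omega-mu3} then forces $\mu_3(G)\geq 3$; and once $\mu_3(G)\leq 3$ is established, the same corollary gives $\omega(G)\leq 2$, closing both equalities. You correctly identify that the entire content of the proposition is the single upper bound $\mu_3(G)\leq 3$, i.e.\ exhibiting three $1$-factors with $|E_0|=3$.

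Precisely there, however, you have a genuine gap: the construction is never carried out, and the sketch you give for the flower snark fails on its most natural reading. If the spokes are coloured ``consistently around all indices,'' say $O_iA_i\mapsto 1$, $O_iB_i\mapsto 2$, $O_iC_i\mapsto 3$ for every $i$, then each edge $B_iC_{i+1}$ and $C_iB_{i+1}$ sees colours $2$ and $3$ at its ends and is forced to colour $1$, so two colour-$1$ edges meet at every $B_i$ and every $C_i$: the defect is not a parity clash confined to the $A$-cycle, it is a conflict at all $2n$ outer vertices. (Likewise, alternating two matchings around the $BC$-cycle forces both spokes at every hub $O_i$ into the third matching.) Any working scheme must instead alternate two \emph{different} claw-patterns with period $2$ around the index cycle and absorb the odd-length clash at a single junction, using doubly covered edges there; one must then verify that each $M_i$ is a perfect matching, that $|E_3|=0$ and $|E_2|=3$, and repeat the exercise with different building blocks for the Goldberg snarks, for which you offer only ``the same scheme applies.'' Note that the paper's later argument (the case $k=5$ of the final theorem) takes for granted that a $\mu_3(J_7)$-core is a circuit of length $6$; producing such a core explicitly is exactly the construction your proposal defers. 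Since that is the only non-trivial step, the proposal as written does not yet prove the proposition, although its reduction framework is correct and would be a sensible way to organize a complete proof.
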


Since highly cyclically edge-connected snarks are of general interest, we prove the following statement.
\begin{theorem} \label{Thm_G^k}
For every positive integer $k$, there is a cyclically 5-edge-connected cubic graph $G^k$ with $\mu_3(G^k) = 2 \omega(G^k) = 4k$.
\end{theorem}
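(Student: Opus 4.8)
The plan is to follow the block-junction strategy used for Theorem \ref{G_k}, but with a building block tuned so that the optimal core is forced to be strongly non-cyclic. Concretely, I would construct $G=G^k$ as a circular junction of $k$ copies $B_0,\dots,B_{k-1}$ of a single cyclically-highly-connected gadget $B$ carrying two ``ports'' of five terminal edges each, where consecutive copies are glued port-to-port by five junction operations, so that each junction-cut has exactly five edges and $G^k$ is cubic. The gadget must be designed so that, per block, the oddness (equivalently resistance) contribution is $2$ while any three $1$-factors are forced to leave $4$ edges uncovered; note that this is genuinely stronger than a single Petersen graph (which has $\mu_3=3$), so $G^1$ will be a snark strictly larger than the Petersen graph with $\mu_3=4$. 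The goal is then to establish four facts about $G^k$: cyclic $5$-edge-connectivity, $\omega(G^k)=2k$, and $\mu_3(G^k)=4k$; the displayed equality $\mu_3=2\omega=4k$ is immediate once these hold.

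For the connectivity and the two upper bounds I expect only routine work. Cyclic $5$-edge-connectivity follows because the interior of each $B$ contains no cyclic edge-cut of size at most $4$, while the only new cuts created by the gluing are the size-$5$ port-cuts, which are permitted by definition. For $\omega(G^k)\le 2k$ I would exhibit a $2$-factor that restricts to each block as a pair of odd circuits, giving exactly $2k$ odd circuits in all. For $\mu_3(G^k)\le 4k$ I would, exactly as in the proof of Theorem \ref{G_k}, assemble three $1$-factors $M',M'',M'''$ from prescribed matchings of the $B_i$ (compatible across the ports) that leave precisely four uncovered edges inside every block, hence $4k$ uncovered edges in total. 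The matching lower bound $\omega(G^k)\ge 2k$ I would obtain as in Theorem \ref{G_k}: from the per-block estimate $r_3(B)=2$, together with additivity of resistance across the junction-cuts and the inequality $r_3\le\omega$ of \cite{Steffen_2013} (used already there), one gets $\omega(G^k)\ge r_3(G^k)\ge 2k$, which with the upper bound yields $\omega(G^k)=2k$.

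The real content, and the step I expect to be the main obstacle, is the lower bound $\mu_3(G^k)\ge 4k$. The generic estimate from Corollary \ref{omega-mu3} only gives $\mu_3\ge\tfrac{3}{2}\omega=3k$, which is too weak; since here $\omega=\tfrac12\mu_3<\tfrac23\mu_3$, the $\mu_3(G^k)$-core must be far from a Petersen core, i.e.\ $E_3\ne\emptyset$ in a quantified way, and this must be proved directly. I would argue locally. Fix an arbitrary list $S_3$ of three $1$-factors and use the identity $|E_0|=|E_2|+2|E_3|$, valid for every core by Lemma \ref{k_core}. Restricting the three matchings to a single block $B_i$ and recording how each of them meets the two incident port-cuts, I would exploit the parity of the intersection of a $1$-factor with a $5$-edge-cut, together with the fact that the interior of $B$ admits no proper $3$-edge-colouring compatible with the induced boundary pattern, to show that each $B_i$ forces a contribution of at least $4$ to $|E_2|+2|E_3|$. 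The delicate point is to make this bound genuinely local, so that no uncovered or doubly covered edge is counted for two blocks sharing a port; I would resolve this by charging each port edge to a single fixed one of its two incident blocks. Summing the $k$ disjoint block contributions then gives $|E_0|=|E_2|+2|E_3|\ge 4k$ for every $S_3$, hence $\mu_3(G^k)\ge 4k$.

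Finally, combining $\mu_3(G^k)\le 4k$ with $\mu_3(G^k)\ge 4k$ gives $\mu_3(G^k)=4k$, and together with $\omega(G^k)=2k$ this yields $\mu_3(G^k)=2\omega(G^k)=4k$, as required. The crux throughout is the simultaneous design of the gadget $B$ realizing the invariants $2$ (for oddness/resistance) and $4$ (for the uncovered-edge deficiency) while keeping cyclic $5$-edge-connectivity, and the accompanying local counting argument for the $\mu_3$ lower bound.
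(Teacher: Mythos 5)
There is a genuine gap: your entire argument is contingent on a gadget $B$ that you never construct. The theorem is an existence statement, so the witness graph is the whole content of the proof; postulating a block with two 5-edge ports that simultaneously realizes $r_3(B)=2$, admits a 2-factor restriction with exactly two odd circuits, admits three compatible matchings leaving exactly four uncovered edges, has no small internal cyclic cuts, \emph{and} forces every list of three 1-factors to incur a local deficiency of at least $4$, is precisely the hard part, and you acknowledge this yourself in your closing sentence. The last property is especially problematic: non-3-edge-colorability of a block only forces \emph{one} uncovered edge in it, so your proposed local argument (parity of 1-factor intersections with the 5-cuts plus incompatibility with boundary colorings) must extract a contribution of $4$ per block, a much stronger requirement that you give no mechanism for verifying, and no candidate gadget against which to verify it. As written, the proposal is a plan for a proof, not a proof.

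For contrast, the paper avoids the need for any such strong local property. It uses $2k$ copies of a concrete 5-pole $D$ (so each block only needs to contribute $2$, not $4$), joined in a ring by two edges between consecutive blocks plus chords $w_iw_{i+k}$, and the lower bound $\mu_3(G^k)\geq 4k$ comes from a geometric rather than a parity argument: each non-3-edge-colorable block contains an uncovered edge $e_i$ of the core; since covered edges of a core form a matching, at least half the edges of every core circuit are uncovered; and since $G^k$ has girth at least $5$ and every terminal-to-terminal path in $D$ has length at least $3$, a core circuit passing through $t$ of the edges $e_i$ has length at least $4t$, hence at least $2t$ uncovered edges. Summing over circuits gives $4k$. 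This shows the per-block requirements can be kept weak (non-colorability plus distance conditions), which is what makes an explicit gadget easy to exhibit; if you want to salvage your version, you would need to either exhibit your $B$ and prove the per-block deficiency-$4$ bound for it, or weaken your local requirement and replace the counting by a circuit-length argument of this kind.
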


\begin{proof} We will construct graphs with these properties.

Let $D$ be a 5-pole with terminals $u,v,w,x,y$ as shown in Figure \ref{Fig_NetworkD}.
Take $2k$ copies $D_1,\dots,D_{2k}$ of $D$, and let $G^{k}$ be the junction of $D_1,\dots,D_{2k}$ on $(x_i,u_{i+1})$ and $(y_i,v_{i+1})$ for $i \in \{1,\dots,2k\}$ and on $(w_i,w_{i+k})$ for $i \in \{1,\dots,k\}$ (Figure \ref{Fig_G^2} illustrates $G^2$).

\begin{figure}[hh]
  \centering
  \includegraphics[width=10cm]{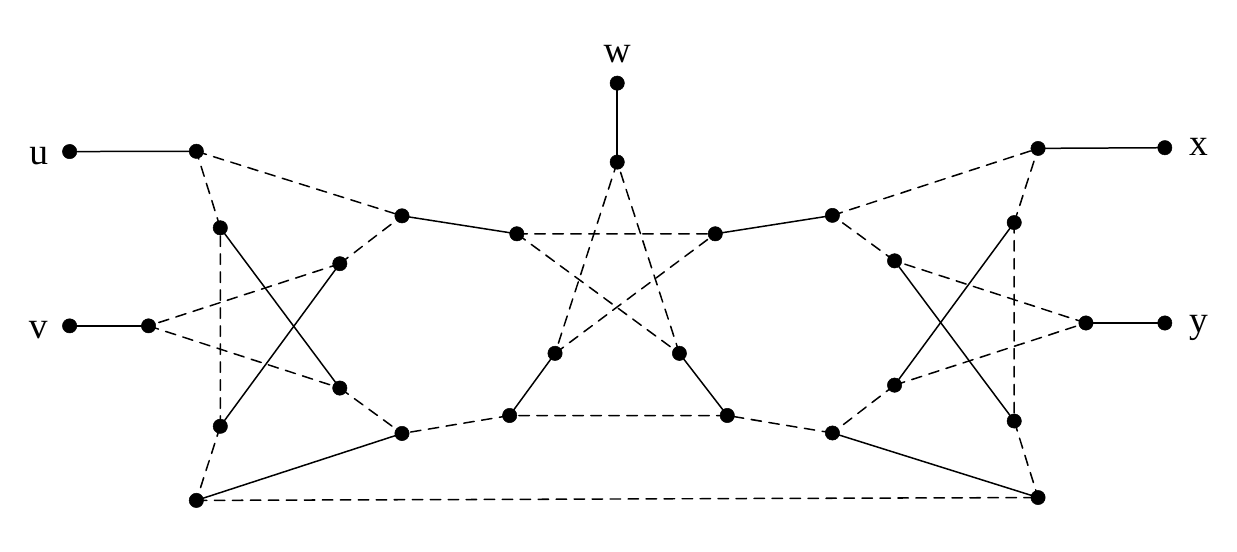}\\
  \caption{The 5-pole $D$ and a 2-regular subgraph $S$ of $D$ in dotted line}\label{Fig_NetworkD}
\end{figure}

\begin{figure}[hh]
  \centering
  \includegraphics[width=12cm]{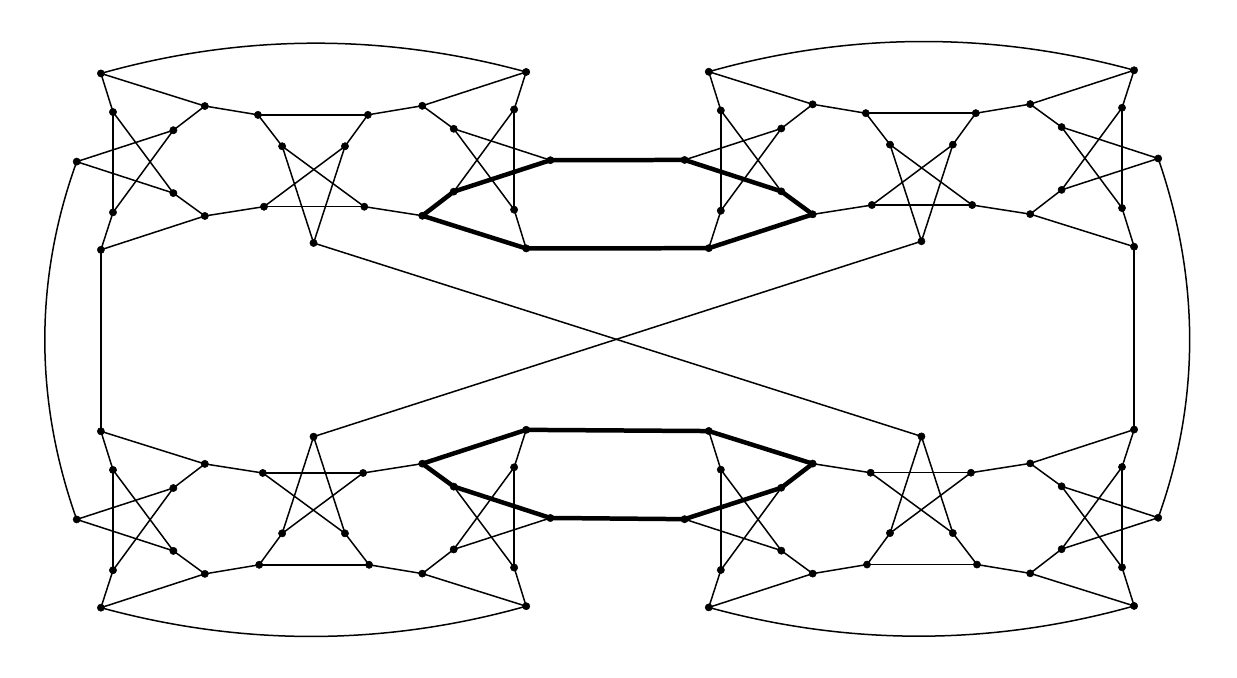}\\
  \caption{$G^2$ and a $\mu_3(G^2)$-core of $G^2$ in bold line}\label{Fig_G^2}
\end{figure}

We claim that $G^{k}$ is a cyclically 5-edge-connected cubic graph such that $\omega(G^k) = 2k$ and $\mu_3(G^k) = 4k$.

Since $D$ is not 3-edge-colorable, every cover by three matchings leaves at least one edge uncovered. Thus, $r(G^k)\geq 2k$ and $\omega(G^k)\geq 2k$.

Let $S_i$ be a set of edges of $D_i$ as shown in Figure \ref{Fig_NetworkD} and let $F=\bigcup_{i=1}^{2k}S_i$.
It is easy to see that $F$ is a 2-factor of $G^k$ that contains precisely $2k$ odd circuits. Thus, $\omega(G^k)\leq 2k$ and therefore, $\omega(G^k) = 2k$.

Let $\widetilde{D_i}$ be the junction of $D_{2i-1}$ and $D_{2i}$ on $(x_{2i-1},u_{2i})$ and $(y_{2i-1},v_{2i})$ ($i \in \{1,2,\dots,k\}$), and $M'_i,M''_i,M'''_i$ be three matchings of $\widetilde{D_i}$ as shown in Figure \ref{Fig_DD_123} labeled with numbers $1, 2, 3$, respectively. Let $M'=\bigcup_{i=1}^kM'_i$, $M''=\bigcup_{i=1}^kM''_i$, $M'''=\bigcup_{i=1}^kM'''_i$.
The three 1-factors $M',M'',M'''$ cover all but $4k$ edges of $G^k$. Hence, $\mu_3(G^k)\leq 4k$.

On the other hand, let $G_c$ be a $\mu_3(G^k)$-core of $G^k$. Since each $D_i$ is not 3-edge-colorable,
it has at least one uncovered edge of $G_c$, say $e_i$. Let C be any circuit of $G_c$ containing precisely $t$ members of $\{e_1,\ldots,e_{2k}\}$.
First suppose that $t=1$. Since the girth of $G^k$ is at least 5, it follows that $|E(C)|\geq 5$. Next suppose that $t\geq 2$.
Clearly, each path of $D_i$ joining any two terminals of $D_i$ is of length at least 3. Since $C$ goes through $t$ members of $\{D_1,\ldots,D_{2k}\}$, $|E(C)|\geq 4t$. In both cases we have $|E(C)|\geq 4t$
and thus, $C$ contains at least $2t$ uncovered edges.
Since each $e_i$ lies on precisely one circuit of $G_c$, it follows that $G_c$ contains at least $4k$ uncovered edges. Thus, $\mu_3(G^k)\geq 4k$, and therefore, $\mu_3(G^k) = 4k$.
\begin{figure}[hh]
  \centering
  \includegraphics[width=17cm]{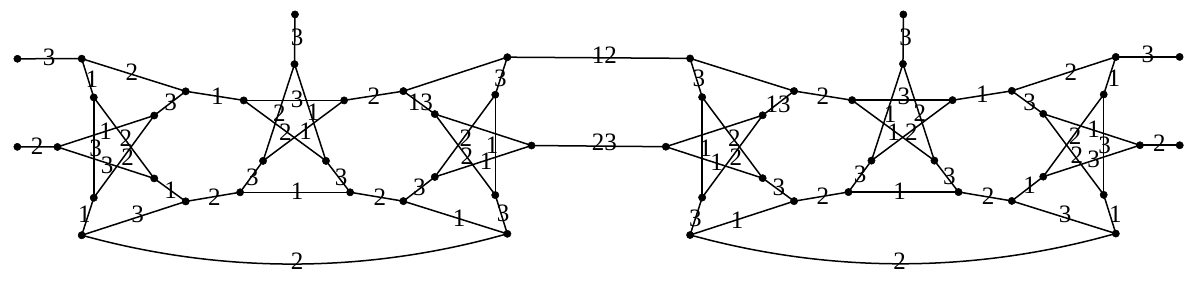}\\
  \caption{The 6-pole $\widetilde{D_i}$
           and three matchings $M'_i,M''_i,M'''_i$ of $\widetilde{D_i}$ labeled with numbers 1,2,3, respectively.}\label{Fig_DD_123}
\end{figure}
\end{proof}

Theorems \ref{G_k} and \ref{Thm_G^k} imply that for every positive integer $k$ with $k\equiv 0 ~(mod ~3)$
there exists a cyclically 4-edge-connected cubic graph $G$ with $\mu_3(G)=k$, and for every positive integer $k$ with $k\equiv 0 ~(mod ~4)$
there exists a cyclically 5-edge-connected cubic graph $G$ with $\mu_3(G)=k$.
We will prove that for every $k \geq 3$ there is a bridgeless cubic graph $H$ with $\mu_3(H) = k$.

Let $G'$ and $G''$ be two bridgeless cubic graphs that are not 3-edge-colorable.
Let $e'=xy$ be an uncovered edge of a $\mu_3(G')$-core of $G'$,
and $e''=uv$ be an uncovered edge of a $\mu_3(G'')$-core of $G''$.
A 2-junction of $G'$ and $G''$ is the graph $G$ with $V(G)=V(G')\cup V(G'')$ and $E(G)=E(G')\cup E(G'')\cup \{ux,vy\}\setminus \{e',e''\}.$
The set $\{ux,vy\}$ is called the 2-junction-cut of $G$ with respect to $G'$ and $G''$.

\begin{lemma} \label{Thm_2junction}
Let $G'$ and $G''$ be two bridgeless cubic graphs that are not 3-edge-colorable. If $G$ is a 2-junction of $G'$ and $G''$, then $\mu_3(G)=\mu_3(G')+\mu_3(G'')$.
\end{lemma}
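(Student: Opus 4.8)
The plan is to prove the two inequalities separately. For the upper bound $\mu_3(G)\leq\mu_3(G')+\mu_3(G'')$, I would start from lists $S_3'=\{M_1',M_2',M_3'\}$ and $S_3''=\{M_1'',M_2'',M_3''\}$ realizing $\mu_3(G')$ and $\mu_3(G'')$, chosen so that $e'\in E_0(S_3')$ and $e''\in E_0(S_3'')$; such lists exist because $e'$ and $e''$ are uncovered edges of the respective $\mu_3$-cores. Since $e'\notin M_i'$ and $e''\notin M_i''$ for every $i$, each $M_i'$ and $M_i''$ is a set of edges of $G$, and $M_i:=M_i'\cup M_i''$ is a perfect matching of $G$ avoiding both junction edges $ux,vy$. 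An edge of $E(G')\setminus\{e'\}$ is uncovered by $\{M_1,M_2,M_3\}$ exactly when it is uncovered by $S_3'$, and likewise on the $G''$-side; together with $e'\in E_0(S_3')$ and $e''\in E_0(S_3'')$ this leaves $(\mu_3(G')-1)+(\mu_3(G'')-1)$ uncovered edges inside the two halves, while the two junction edges $ux,vy$ are uncovered by all three $M_i$. The total is exactly $\mu_3(G')+\mu_3(G'')$.

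For the lower bound I would take an arbitrary list $\{M_1,M_2,M_3\}$ of 1-factors of $G$ and use that the junction-cut $\{ux,vy\}$ is a 2-edge-cut whose two sides $V(G')$ and $V(G'')$ both have even cardinality. A parity argument then forces each $M_i$ to meet the cut in $0$ or $2$ edges. I would split into cases per index: if $M_i$ avoids the cut, set $N_i'=M_i\cap E(G')$ and $N_i''=M_i\cap E(G'')$; if $M_i$ contains both $ux$ and $vy$, then $x,y$ are unmatched on the $G'$-side and $u,v$ on the $G''$-side, so I restore 1-factors by taking $N_i'=(M_i\cap E(G'))\cup\{e'\}$ and $N_i''=(M_i\cap E(G''))\cup\{e''\}$. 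In either case $N_i'$ is a 1-factor of $G'$ and $N_i''$ one of $G''$.

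The bookkeeping runs as follows. Let $a$ be the number of indices $i$ for which $M_i$ uses both cut edges, and write $[a=0]$ for $1$ if $a=0$ and $0$ otherwise. Every edge of $E(G')\setminus\{e'\}$ is covered by $M_i$ iff it is covered by $N_i'$, so the uncovered edges of $G$ lying in $E(G')\setminus\{e'\}$ coincide with those of $G'$ under $\{N_1',N_2',N_3'\}$ except possibly for $e'$ itself; since $e'\in N_i'$ precisely when $M_i$ uses the cut, $e'$ is uncovered in $G'$ iff $a=0$. Hence the uncovered edges of $G$ in $E(G')\setminus\{e'\}$ number at least $\mu_3(G')-[a=0]$, and symmetrically at least $\mu_3(G'')-[a=0]$ on the other side. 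Each junction edge $ux,vy$ is uncovered by $\{M_1,M_2,M_3\}$ iff $a=0$, contributing $2[a=0]$. Summing, the correction terms cancel and the total is at least $\mu_3(G')+\mu_3(G'')$, giving $\mu_3(G)\geq\mu_3(G')+\mu_3(G'')$.

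The main obstacle is the lower bound, and within it the delicate accounting of how $e'$, $e''$ and the two junction edges gain or lose coverage according to the parameter $a$. The crucial observation is that whenever the junction is used (so $e'$ and $e''$ may become covered inside $G'$ and $G''$, saving up to two uncovered edges), the two junction edges $ux,vy$ become covered as well, while if the junction is unused these two edges are themselves uncovered; the two effects offset exactly, so the bound holds independently of $a$.
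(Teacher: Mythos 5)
Your proof is correct and follows essentially the same route as the paper: the upper bound by gluing optimal lists of $G'$ and $G''$ that leave $e'$ and $e''$ uncovered, and the lower bound by using the parity of the junction-cut to split each 1-factor of $G$ into 1-factors of $G'$ and $G''$ (restoring $e'$ and $e''$ when the cut is used). The paper phrases the lower bound as a terse proof by contradiction and leaves the bookkeeping implicit, whereas you carry it out explicitly, but the substance is identical.
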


\begin{proof} By construction, $G$ has a $k$-core with $k \leq \mu_3(G')+\mu_3(G'')$.
Hence, $\mu_3(G)\leq \mu_3(G')+\mu_3(G'')$.

Suppose to the contrary that $\mu_3(G) < \mu_3(G')+\mu_3(G'')$.
Let $ux, vy$ be the 2-junction-cut of $G$ with respect to $G'$ and $G''$, and $u,v\in V(G')$ and $x,y\in V(G'')$.
Let $G_c$ be a $\mu_3(G)$-core of $G$ with respect to three 1-factors $M_1, M_2, M_3$. Then each $M_i$ contains either none of $ux$ and $vy$ or both of them.
Furthermore, $M_i$ induces 1-factors $F_i'$ and $F_i''$ in $G'$ and $G''$, respectively. It follows that there is a $k$-core either in $G'$ with $k < \mu_3(G')$ or in $G''$ with $k < \mu_3(G'')$, a contradiction.
\end{proof}

\begin{theorem}
For every integer $k\geq 3$, there exists a bridgeless cubic graph $G$ such that $\mu_3(G)=k$.
\end{theorem}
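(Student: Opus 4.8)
The plan is to realise the three smallest admissible values $\mu_3\in\{3,4,5\}$ by explicit snarks and then to produce every larger value by gluing copies of these blocks along $2$-junctions, exploiting the additivity of Lemma~\ref{Thm_2junction}.

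First I would fix the base blocks. Theorem~\ref{G_k} with $k=1$ gives a bridgeless cubic graph $G_3$ with $\mu_3(G_3)=3$, and Theorem~\ref{Thm_G^k} with $k=1$ gives one, $G_4$, with $\mu_3(G_4)=4$. The essential new ingredient is a third block $G_5$ with $\mu_3(G_5)=5$. It cannot be built from $G_3$ and $G_4$ by junctions, since $5$ is the Frobenius number of $\{3,4\}$: indeed $5=3\cdot 4-3-4$ is the largest integer not representable as $3a+4b$ with $a,b\ge 0$. Hence I would construct $G_5$ by hand: exhibit a snark together with three $1$-factors whose list leaves exactly five uncovered edges (giving $\mu_3(G_5)\le 5$), and then establish the matching lower bound $\mu_3(G_5)\ge 5$. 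Note that the oddness bound is of no help here, since $\mu_3=5$ forces $\omega=2$ (oddness is even and $\omega\le\frac{2}{3}\cdot 5<4$), so Corollary~\ref{omega-mu3} only yields $\mu_3\ge 3$. The lower bound must instead come from a direct structural count on the $\mu_3$-core, in the spirit of the lower-bound parts of Theorems~\ref{G_k} and~\ref{Thm_G^k}: each non-$3$-edge-colorable gadget contributes an uncovered edge, and a girth estimate via Lemma~\ref{girth} forces the circuits of the core through the gadgets to carry enough uncovered edges to reach five.

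Granting $G_3,G_4,G_5$, a general $k\ge 3$ is handled as follows. The values $3,4,5$ are realised by $G_3,G_4,G_5$ themselves. For $k\ge 6$ I would write $k=3a+4b$ with integers $a,b\ge 0$ (possible for every $k\ge 6$ precisely because the Frobenius number is $5$), necessarily with $a+b\ge 2$, and let $G$ be obtained by iterated $2$-junctions of $a$ copies of $G_3$ and $b$ copies of $G_4$, performed one block at a time. At each step the two graphs being joined are bridgeless cubic and not $3$-edge-colorable, and the junction again has these properties: it is cubic by construction; it is not $3$-edge-colorable because its $\mu_3$ is positive; and it is bridgeless, since the two crossing edges form a $2$-edge-cut (so neither is a bridge, as the other keeps the factors joined), while an internal edge cannot be a bridge either, because even if its removal together with the deleted core edge separates a factor, both resulting parts remain attached to the other factor through the two crossing edges. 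Consequently each factor still has an uncovered edge in a $\mu_3$-core, the next junction is well defined, and Lemma~\ref{Thm_2junction} applies at every step to give $\mu_3(G)=3a+4b=k$.

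The main obstacle is the construction and verification of $G_5$; the rest is bookkeeping, the Frobenius observation guaranteeing that $\{3,4\}$ generates every value $\ge 6$. I expect the delicate point to be the lower bound $\mu_3(G_5)\ge 5$, that is, ruling out every list of three $1$-factors leaving four or fewer edges uncovered. To make this transparent I would design $G_5$ so that the count lands exactly on the odd value $5$ (the symmetric families above naturally produce the even counts $3k$ and $4k$, apart from the odd case $\mu_3=3$), tuning a gadget together with a girth condition so that Lemma~\ref{girth} and a localisation of the uncovered edges inside the gadgets jointly force at least five uncovered edges, while the explicit list of three $1$-factors attains that bound.
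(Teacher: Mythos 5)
Your overall architecture matches the paper's: realise $\mu_3=3$ and $\mu_3=4$ by the graphs $G_1$ (Theorem~\ref{G_k}) and $G^1$ (Theorem~\ref{Thm_G^k}), observe that $5$ is the Frobenius number of $\{3,4\}$ so it needs a separate construction, and obtain every $k\geq 6$ from the representation $k=3a+4b$ together with the additivity of $\mu_3$ under $2$-junctions (Lemma~\ref{Thm_2junction}). The only structural difference in the composite case is that you iterate $2$-junctions over $a+b$ small blocks, while the paper performs a single $2$-junction of $G_{k'}$ and $G^{k''}$, whose $\mu_3$-values are already $3k'$ and $4k''$; your iterated version works, but it forces you to re-verify at every step that the intermediate graph is bridgeless, cubic, and not $3$-edge-colorable, all of which the paper's one-step junction avoids.

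The genuine gap is $G_5$: you explicitly defer its construction (``I would construct $G_5$ by hand \dots the main obstacle is the construction and verification of $G_5$''), and since $5$ is exactly the value unreachable by junctions of the other blocks, the theorem is unproved for $k=5$ without it. This is precisely where the paper does real work: it takes two copies $J',J''$ of the flower snark $J_7$, whose $\mu_3(J_7)$-core is a $6$-circuit, deletes from each copy a vertex $u$ lying on that core, and joins the two neighbor triples by three new edges $v'v'',w'w'',x'x''$. The upper bound $\mu_3\leq 5$ comes from the induced core being a circuit of length $10$; the lower bound comes from a case analysis on whether a $\mu_3$-core meets the junction-cut, using that $J_7$ has girth $6$ (a core avoiding the cut has a circuit inside one copy, hence $\geq 6$ uncovered edges; a core meeting the cut has a circuit through both copies, hence length $\geq 10$ and $\geq 5$ uncovered edges). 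Note also that your fallback idea of getting the lower bound purely from Lemma~\ref{girth} would require girth at least $10$, which the paper's example does not have and which you never exhibit; the paper's counting argument is local to the gadgets, not a global girth bound. So your proposal is a correct plan for $k\neq 5$ but does not constitute a proof of the statement as it stands.
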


\begin{proof}
Let us first consider the case $k \neq 5$. Then there exist two non-negative integers $k'$ and $k''$ such that $k=3k'+4k''$.
By theorems \ref{G_k} and \ref{Thm_G^k}, there is a cyclically 4-edge-connected cubic graph $H'$ with $\mu_3(H')=3k'$ and a
cyclically 5-edge-connected cubic graph $H''$ with $\mu_3(H'')=4k$.
If $k'=0,$ then take $G=H''$ as desired. If $k''=0$, then take $G=H'$ as desired.
Hence, we may next assume that $k',k''>0$.
Let $G$ be a 2-junction of $H'$ and $H''$. By Lemma \ref{Thm_2junction}, $\mu_3(G)=\mu_3(H')+\mu_3(H'')=k$, we are done.

It remains to consider the case $k=5$. Consider the flower snark $J_7$, see Figure \ref{J7}.
Let $J_c$ be a $\mu_3(J_7)$-core of $J_7$. Note that $J_c$ is a circuit of length 6. Let  $u$ be a vertex of $J_c$ and $v,w,x$ be its three neighbors in $J_7$.
Take two copies $J',J''$ of $J_7$. Let $G$ be a 3-junction of $J'$ and $J''$ on $\{u,u''\}$ where $\{v'v'',w'w'',x'x''\}$ is the 3-junction-cut.
This operation yields a core $G_c$ of $G$ and $G_c$ is a circuit of length 10. Hence, $\mu_3(G)\leq 5$.

On the other hand, let $T$ be any $\mu_3(G)$-core of $G$.
By the structure of $T$ as a core, if $\{v'v'', w'w'', x'x''\}\cap (E_0\cup E_2)=\emptyset$,
then both $J'$ and $J''$ contain a circuit of $T$.
Since the girth of $J_7$ is 6, it follows that $T$ has at least six uncovered edges, a contradiction.
Hence, we may assume that $v'v''\in E_0\cup E_2$. Let $C$ be the circuit of $T$ containing $v'v''$.
Clearly, $C$ goes through both $J'$ and $J''$. Since again the girth of $J$ is 6, $C$ is of length at least 10. It follows that $T$ has at least five uncovered edges and thus, $\mu_3(G)\geq 5$. Therefore, $\mu_3(G) = 5$ and every $\mu_3(G)$-core of $G$ is a circuit of length 10.
\end{proof}

\begin{figure}[hh]
  \centering
  \includegraphics[width=4.5cm] {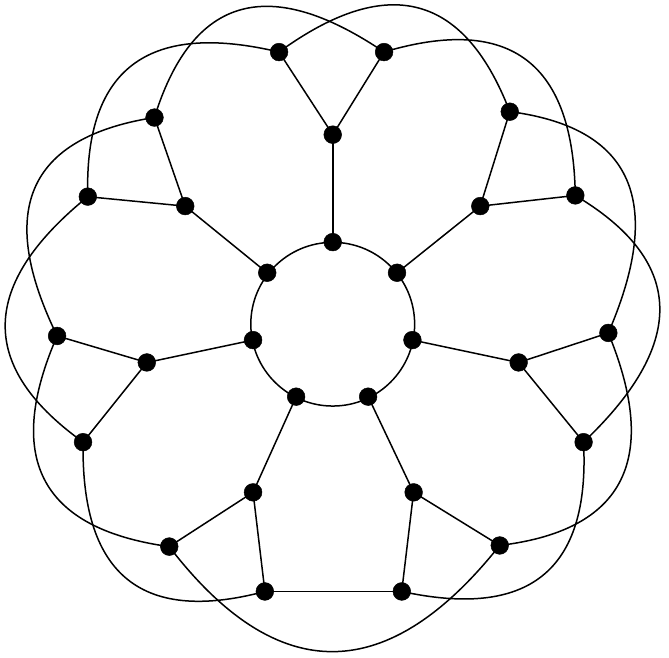}\\
  \caption{flower snark $J_7$}\label{J7}
\end{figure}

\end{document}